\newtheorem{theorem}{Theorem}[section]
\newtheorem{lemma}{Lemma}[section]
\newtheorem{remark}{Remark}[section]
\newtheorem{corollary}{Corollary}[section]
\newtheorem{proposition}{Proposition}[section]
\numberwithin{equation}{section}
\begin{document}
	
\title{A convex-block approach for  numerical radius inequalities}
\author{Mohammad Sababheh, Cristian Conde, and Hamid Reza Moradi}
\subjclass[2010]{Primary 47A12, 47A30, Secondary 15A60, 47B15}
\keywords{Numerical radius, norm inequality, Cartesian decomposition, triangle inequality}

\begin{abstract}
This article implements a simple convex approach and block techniques to obtain several new refined versions of numerical radius inequalities for Hilbert space operators. This includes comparisons among the norms of the operators, their Cartesian parts, their numerical radii, the numerical radius of the product of two operators, and the Aluthge transform.
\end{abstract}
\maketitle
\pagestyle{myheadings}
\markboth{\centerline {}}
{\centerline {}}
\bigskip
\bigskip

\section{Introduction}
Let $\mathcal{B}(\mathcal{H})$ denote the $C^*-$algebra of all bounded linear operators acting on a Hilbert space $\mathcal{H}$. An operator $T\in\mathcal{B}(\mathcal{H})$ is said to be positive if, for every $x\in\mathcal{H}$, one has $\left<Tx,x\right>\geq 0$. In this case, we simply write $A\geq O.$ Positive operators play an important role in understanding the geometry of a Hilbert space, and these operators constitute a special class of the wider class of self-adjoint operators; that is $A^*=A$, where $A^*$ denotes the conjugate of $A$. Among the most basic properties of self-adjoint operators is the fact that 
$$\|T\|=\omega(T)=r(T),\;T\;{\text{is\;normal}},$$ where $\|\cdot\|, \omega(\cdot)$, and $r(\cdot)$ denote the operator norm, the numerical radius, and the spectral radius respectively. Actually, for a general $T\in\mathcal{B}(\mathcal{H})$ one has 
 $$\|T\|\geq \omega(T)\geq r(T).$$\\
While both $\|\cdot\|$ and $\omega(\cdot)$ are  norms on $\mathcal{B}(\mathcal{H})$, $r(\cdot)$ is not. In fact, we have the equivalence relation \cite[Theorem 1.3-1]{gust}
\begin{equation}\label{eq_equiv_norms}
\frac{1}{2}\|T\|\le\omega(T)\le\|T\|,\;T\in\mathcal{B}(\mathcal{H}).
\end{equation}

Numerous researchers' core interests have been sharpening the above inequality and obtaining new possible relations between $\|\cdot\|$ and $\omega(\cdot)$. This is because $\|\cdot\|$ is much easier to compute than $\omega(\cdot)$, not to forget the math appetite for obtaining such new relations.

The Cartesian decomposition of $T\in\mathcal{B}(\mathcal{H})$ is $T=\mathfrak{R}T+\textup i\mathfrak{I}T$, where  $\mathfrak{R}T=\frac{T+T^*}{2}$ and $\mathfrak{I}T=\frac{T-T^*}{2\textup i}$ are the real and imaginary parts of $T$, respectively. Although $\|T\|\geq \omega(T)$ is always valid, the following reverses hold for the Cartesian components of $T$, see \cite[Theorem 2.1]{1}
\begin{equation}\label{eq_norm_reim}
\|\mathfrak{R}T\|\le\omega(T),\;\|\mathfrak{I}T\|\le \omega(T).
\end{equation}

While the original definition of $\omega(\cdot)$ is based on a  supremum over inner product values (i.e., $\omega(T)=\sup_{\|x\|=1}|\left<Tx,x\right>|$), the following identity is extremely useful \cite{3}
\begin{equation}\label{eq_w_re}
\underset{\theta \in \mathbb{R}}{\mathop{\sup }}\,\left\| {{\operatorname{\mathfrak Re}}^{\textup i\theta }}T \right\|=\omega \left( T \right).
\end{equation}

Exploring further relations between $\|\cdot\|$ and $\omega(\cdot),$ 
it has been shown in  \cite[Theorem 2.3]{1} that
\begin{equation}\label{eq_fuad_mos}
\|A+B\|\le 2\omega\left(\left[\begin{array}{cc}O&A\\B^*&O\end{array}\right]\right)\le \|A\|+\|B\|;
\end{equation}
as an interesting refinement of the triangle inequality of norms, using the numerical radius of a matrix operator.

Having the matrix operator term in \eqref{eq_fuad_mos} is not a coincidence. In fact, numerous results have included such terms while studying numerical radius inequalities. For example,  it has been shown in \cite[Theorem 2.4]{4} that
\begin{equation}\label{eq_max_w}
\frac{\max \left\{ \omega \left( S+T \right),\omega \left( S-T \right) \right\}}{2}\le \omega \left( \left[ \begin{matrix}
   O & S  \\
   T & O  \\
\end{matrix} \right] \right),\text{ for any }S,T\in \mathcal B\left( \mathcal H \right);
\end{equation}

an inequality which has been reversed in a way or another by the form \cite[Theorem 2.4]{4}
\begin{equation}\label{eq_w_average_w}
\omega \left( \left[ \begin{matrix}
   O & S  \\
   T & O  \\
\end{matrix} \right] \right)\le \frac{\omega \left( S+T \right)+\omega \left( S-T \right)}{2},\text{ for any }S,T\in \mathcal B\left( \mathcal H \right).
\end{equation}

The above matrix operator is not only comparable with numerical radius terms, as we also have \cite[Theorem 2.1]{5} 
\begin{equation}\label{eq_need_prf}
2\omega \left( \left[ \begin{matrix}
   O & A  \\
   {{B}^{*}} & O  \\
\end{matrix} \right] \right)\le \max \left\{ \left\| A \right\|,\left\| B \right\| \right\}+\frac{1}{2}\left( \left\| {{\left| A \right|}^{\frac{1}{2}}}{{\left| B \right|}^{\frac{1}{2}}} \right\|+\left\| {{\left| {{B}^{*}} \right|}^{\frac{1}{2}}}{{\left| {{A}^{*}} \right|}^{\frac{1}{2}}} \right\| \right),
\end{equation}
for any $A,B\in \mathcal B\left( \mathcal H \right)$.

The right-hand side of this latter inequality is related to the Davidson-Power inequality, which has been generalized in \cite[Theorem 5]{6} to the form
\begin{equation}\label{eq_abuamer}
\|A+B^*\|\le \max\{\|A\|,\|B\|\}+\max\{\|\;|A|^{1/2}|B^*|^{1/2}\|,\|\;|A^*|^{1/2}|B|^{1/2}\|\}.
\end{equation}

An important tool in obtaining matrix inequalities is convexity; whether it is scalar or operator convexity. Recall that a function $f:J\to\mathbb{R}$ is said to be convex on the interval $J$ if it satisfies $f((1-\lambda)a+\lambda b)\le (1-\lambda)f(a)+\lambda f(b)$ for all $a,b\in J$ and $0\le \lambda\le 1$. In convex analysis, the Hermite-Hadamard inequality which states that for a convex function $f$ on $[0,1]$ one has
\begin{equation}\label{eq_hh}
f\left( \frac{1}{2} \right)\le \int\limits_{0}^{1}{f\left( t \right)dt}\le \frac{f\left( 0 \right)+f\left( 1 \right)}{2},
\end{equation}
is a non-avoidable tool. Notice that this inequality provides a refinement of the mid-convexity condition of $f$.

Our target in this paper is to further explore numerical radius and operator norm inequalities, via matrix  operators and convex functions. For this, we begin by noting that 
since $\omega(\cdot)$ and $||\cdot||$ are norms, one can  easily verify that the functions
\[f\left( t \right)=\omega \left( \left( 1-t \right)T+t{{T}^{*}} \right),\;{\text{and}}\;g(t)=\left\| \left( 1-t \right)T+t{{T}^{*}} \right\|\]
are convex on $\left[ 0,1 \right]$.

With a considerable amount of research devoted to inequalities of convex functions, the following inequalities which have been shown in \cite{2}  for a convex function $f:\left[ 0,1 \right]\to \mathbb{R}$ have played a useful role in the literature
	\[f\left( t \right)\le \left( 1-t \right)f\left( 0 \right)+tf\left( 1 \right)-2r\left( \frac{f\left( 0 \right)+f\left( 1 \right)}{2}-f\left( \frac{1}{2} \right) \right),\]
and
\[\left( 1-t \right)f\left( 0 \right)+tf\left( 1 \right)\le f\left( t \right)+2R\left( \frac{f\left( 0 \right)+f\left( 1 \right)}{2}-f\left( \frac{1}{2} \right) \right),\]
where $r=\min \left\{ t,1-t \right\}$, $R=\max \left\{ t,1-t \right\}$, and $0\le t\le 1$. 
We refer the reader to \cite{sab_mia,sab_mjom} for some applications and further discussion of these inequalities.

Applying these later inequalities to the convex functions $f$ and $g$ above implies the following refinements and reverses of \eqref{eq_norm_reim}.
\begin{equation}\label{6}
\frac{\omega \left( T \right)-\omega \left( \left( 1-t \right)T+t{{T}^{*}} \right)}{2R}\le \omega \left( T \right)-\left\| \mathfrak RT \right\|\le \frac{\omega \left( T \right)-\omega \left( \left( 1-t \right)T+t{{T}^{*}} \right)}{2r}.
\end{equation}
Furthermore,
\begin{equation}\label{ned_quo}
\frac{\left\| T \right\|-\left\| \left( 1-t \right)T+t{{T}^{*}} \right\|}{2R}\le \left\| T \right\|-\left\| \mathfrak RT \right\|\le \frac{\left\| T \right\|-\left\| \left( 1-t \right)T+t{{T}^{*}} \right\|}{2r}.
\end{equation}

Using this approach, we will be able to present refined versions and generalizations of most of the above inequalities, with the conclusion of some product inequalities that entail some interesting relations. We refer to inequalities that govern $\omega(AB)$ as product inequalities. It is well-known that $\omega(\cdot)$ is not sub-multiplicative. We refer the reader to \cite{Li} for further discussion of this property. Interestingly, our approach will entail a relation between $\omega(AB)$ and $\|A+B\|$, with an application to the matrix arithmetic-geometric mean inequality that states \cite[Theorem IX.4.5]{bhatia}
\begin{equation*}
\|A^{1/2}B^{1/2}\|\le\frac{1}{2}\|A+B\|,\;A,B\in\mathcal{B}(\mathcal{H}), A,B\geq O. 
\end{equation*}
Namely, we obtain a new refinement of this inequality using the numerical radius; as a new approach to this direction, see Remark \ref{remark_amgm} below.

To achieve our goal, some auxiliary results are needed as follows. 
\begin{lemma}
Let $A,B\in\mathcal{B}(\mathcal{H})$. 
\begin{enumerate}
\item If $n\in\mathbb{N}$, then \cite[Theorem 2.1-1]{gust}
\begin{equation}\label{eq_power_ineq}
\omega(A^n)\le \omega^n(A).
\end{equation}
\item The operator norm satisfies the identity
\begin{equation}\label{eq_norm_blocks}
\left\|\left[\begin{array}{cc}O&A\\A^*&O\end{array}\right]\right\|=\|A\|.
\end{equation}
\end{enumerate}
\end{lemma}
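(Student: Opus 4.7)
The plan is to treat the two items separately; both are classical, and the second is essentially immediate while the first is the substantive half.

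For part (2), set $M=\left[\begin{array}{cc} O & A \\ A^{*} & O \end{array}\right]$. A direct block-matrix multiplication gives $M^{*}M=\left[\begin{array}{cc} AA^{*} & O \\ O & A^{*}A \end{array}\right]$, a block-diagonal positive operator. Applying the $C^{*}$-identity together with the fact that the norm of a block-diagonal operator is the maximum of the diagonal norms,
\[
\|M\|^{2}=\|M^{*}M\|=\max\{\|AA^{*}\|,\|A^{*}A\|\}=\|A\|^{2},
\]
which yields the desired equality. No further work is needed here.

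Part (1) is the Berger power inequality, and this is where the main difficulty lies: the numerical radius is not sub-multiplicative, so any naive inductive argument from a putative bound $\omega(AB)\le\omega(A)\omega(B)$ is unavailable, and this is the chief obstacle to a short self-contained proof. The classical route is dilation-theoretic. By homogeneity one reduces to the case $\omega(T)=1$ and aims for $\omega(T^{n})\le 1$. One then invokes Berger's theorem on the existence of a $\rho=2$ unitary dilation $U$ on an enlarged Hilbert space, or equivalently the Herglotz-type characterization of operators with numerical radius at most one in terms of the positivity of the real part of the resolvent $(I-zT)^{-1}$ on the open unit disk, and extracts the required coefficient bound on $T^{n}$ via a Cauchy-integral argument. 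Since the statement is explicitly quoted from \cite{gust}, I would present no more than this outline and defer the technical verification to that reference, since reproducing the Berger machinery would be a substantial digression from the paper's main line of development.
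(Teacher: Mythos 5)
Your proposal is correct, and in fact goes beyond what the paper does: the paper offers no proof of this lemma at all, stating both items as known facts with a citation to Gustafson--Rao. Your verification of \eqref{eq_norm_blocks} is the standard one and is sound -- writing $M=\left[\begin{smallmatrix} O & A \\ A^{*} & O \end{smallmatrix}\right]$, noting $M^{*}M=\left[\begin{smallmatrix} AA^{*} & O \\ O & A^{*}A \end{smallmatrix}\right]$ and invoking the $C^{*}$-identity gives $\|M\|^{2}=\max\{\|AA^{*}\|,\|A^{*}A\|\}=\|A\|^{2}$ (one could also observe that $M$ is self-adjoint, so this is simply $\|M\|^{2}=\|M^{2}\|$). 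Your assessment of \eqref{eq_power_ineq} is also accurate: it is Berger's power inequality, the numerical radius is not sub-multiplicative, so no elementary inductive shortcut exists, and the honest routes are the $\rho=2$ unitary dilation or the positive-real-part characterization of $\omega(T)\le 1$ with a Cauchy-integral extraction of the coefficient bound. Deferring that half to the cited reference is exactly the choice the authors themselves make, so there is no gap between your treatment and the paper's; yours simply adds a self-contained proof of the block-norm identity.
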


\section{Main Result}
In this section we present our results, starting with the following simple consequence that follows by applying \eqref{eq_hh} on
\[f\left( t \right)=\omega \left( \left( 1-t \right)T+t{{T}^{*}} \right)\;{\text{and}}\;g(t)=\left\| \left( 1-t \right)T+t{{T}^{*}} \right\|\] yielding refinements of \eqref{eq_norm_reim}.
\begin{proposition}
Let $T\in \mathcal B\left( \mathcal H \right)$. Then
\begin{equation}\label{2}
\left\| \mathfrak RT \right\|\le \int\limits_{0}^{1}{\omega \left( \left( 1-t \right)T+t{{T}^{*}} \right)}dt\le \omega \left( T \right),
\end{equation}
and
\begin{equation}\label{3}
\left\| \mathfrak IT \right\|\le \int\limits_{0}^{1}{\omega \left( \left( 1-t \right){{T}^{*}}-tT \right)dt}\le \omega \left( T \right).
\end{equation}
Moreover,
\begin{equation}\label{15}
\left\| \mathfrak RT \right\|\le \int\limits_{0}^{1}{\left\| \left( 1-t \right)T+t{{T}^{*}} \right\|dt}\le \left\| T \right\|,
\end{equation}
and
\begin{equation}\label{16}
\left\| \mathfrak IT \right\|\le \int\limits_{0}^{1}{\left\| \left( 1-t \right){{T}^{*}}-tT \right\|dt}\le \left\| T \right\|.
\end{equation}
\end{proposition}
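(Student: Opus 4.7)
The plan is to apply the Hermite--Hadamard inequality \eqref{eq_hh} to four convex functions on $[0,1]$, one for each assertion. The excerpt already observes that $t\mapsto\omega((1-t)T+tT^*)$ and $t\mapsto\|(1-t)T+tT^*\|$ are convex, since $\omega(\cdot)$ and $\|\cdot\|$ are norms and the inner argument is affine in $t$. The same reasoning shows that the two analogous maps associated with the imaginary part,
\[
 t\mapsto \omega\bigl((1-t)T^*-tT\bigr),\qquad t\mapsto \bigl\|(1-t)T^*-tT\bigr\|,
\]
are convex on $[0,1]$ as well.

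For each of the four functions I would evaluate the three quantities that appear in \eqref{eq_hh}, namely the midpoint value and the endpoint values. For $f(t)=\omega((1-t)T+tT^*)$ one has $f(0)=\omega(T)$, $f(1)=\omega(T^*)=\omega(T)$, and $f(1/2)=\omega(\mathfrak RT)=\|\mathfrak RT\|$, where the last equality uses the normality of the self-adjoint operator $\mathfrak RT$. Plugging these into \eqref{eq_hh} delivers \eqref{2}. For $g(t)=\|(1-t)T+tT^*\|$ the endpoints give $\|T\|$ and the midpoint gives $\|\mathfrak RT\|$, yielding \eqref{15}.

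For the imaginary inequalities, I would use that $\frac{T^*-T}{2}=-i\,\mathfrak IT$, so that the midpoint of $t\mapsto\omega((1-t)T^*-tT)$ equals $\omega(-i\,\mathfrak IT)=\omega(\mathfrak IT)=\|\mathfrak IT\|$, again by normality of $\mathfrak IT$. The endpoints give $\omega(T^*)=\omega(T)$ at $t=0$ and $\omega(-T)=\omega(T)$ at $t=1$, so \eqref{eq_hh} produces \eqref{3}. The same bookkeeping with $\|\cdot\|$ in place of $\omega(\cdot)$ gives \eqref{16}.

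There is no real obstacle here: the only item requiring a brief verification is that $\omega$ (respectively $\|\cdot\|$) coincides with the operator norm on the self-adjoint operators $\mathfrak RT$ and $\mathfrak IT$, and that $\omega$ is invariant under multiplication by a unit modulus scalar, both of which are standard and have already been recalled in the introduction. Once these facts are in hand, the four inequalities follow by a direct application of \eqref{eq_hh} to the four convex functions above.
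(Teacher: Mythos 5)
Your argument is correct and is exactly the paper's route: the proposition is stated there as an immediate consequence of applying the Hermite--Hadamard inequality \eqref{eq_hh} to the convex functions $t\mapsto\omega((1-t)T+tT^*)$ and $t\mapsto\|(1-t)T+tT^*\|$ (and their analogues for the imaginary part), with the same endpoint and midpoint evaluations you carry out. No gaps; your explicit verification of the midpoint values via self-adjointness of $\mathfrak RT$ and $\mathfrak IT$ is just the detail the paper leaves implicit.
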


The identity \eqref{eq_w_re} provides an alternative formula to evaluate the numerical radius without appealing to the inner product. Interestingly, the inequalities \eqref{2} and \eqref{3} provide the following alternative identities, which help better understand how the numerical radius behaves.
\begin{corollary}
Let $T\in \mathcal B\left( \mathcal H \right)$. Then
\[\omega \left( T \right)=\underset{\theta \in \mathbb{R}}{\mathop{\sup }}\,\left(\int\limits_{0}^{1}{\omega \left( \left( 1-t \right){{e}^{\textup i\theta }}T+t{{e}^{-\textup i\theta }}{{T}^{*}} \right)dt}\right)=\underset{\theta \in \mathbb{R}}{\mathop{\sup }}\,\left(\int\limits_{0}^{1}{\omega \left( \left( 1-t \right){{e}^{-\textup i\theta }}{{T}^{*}}-t{{e}^{\textup i\theta }}T \right)dt}\right).\]
\end{corollary}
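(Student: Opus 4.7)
The plan is to obtain both identities as a sandwich by applying the already-established inequalities \eqref{2} and \eqref{3} to the rotated operator $e^{\textup i\theta}T$ instead of $T$ itself, and then taking the supremum over $\theta \in \mathbb{R}$. The key observation is that the numerical radius is rotation invariant, i.e.\ $\omega(e^{\textup i\theta}T)=\omega(T)$ for every $\theta\in\mathbb{R}$, while simultaneously, by \eqref{eq_w_re}, one has $\sup_{\theta\in\mathbb{R}}\|\mathfrak R(e^{\textup i\theta}T)\|=\omega(T)$. These two facts are precisely what is needed to make the upper and lower bounds in \eqref{2} meet after the supremum is taken.

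In more detail, first I would fix $\theta\in\mathbb{R}$, set $S=e^{\textup i\theta}T$, and note that $S^*=e^{-\textup i\theta}T^*$, so that $(1-t)S+tS^*=(1-t)e^{\textup i\theta}T+te^{-\textup i\theta}T^*$. Applying \eqref{2} to $S$ yields
\[
\|\mathfrak R(e^{\textup i\theta}T)\|\le\int_0^1\omega\bigl((1-t)e^{\textup i\theta}T+te^{-\textup i\theta}T^*\bigr)dt\le\omega(e^{\textup i\theta}T)=\omega(T).
\]
Taking $\sup_\theta$ on the left uses \eqref{eq_w_re} to give $\omega(T)$, while the right-hand side is already bounded by $\omega(T)$; hence the middle term, after taking $\sup_\theta$, is sandwiched between $\omega(T)$ and $\omega(T)$, proving the first identity.

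For the second identity I would repeat the argument with \eqref{3} in place of \eqref{2}, again substituting $S=e^{\textup i\theta}T$. This produces
\[
\|\mathfrak I(e^{\textup i\theta}T)\|\le\int_0^1\omega\bigl((1-t)e^{-\textup i\theta}T^*-te^{\textup i\theta}T\bigr)dt\le\omega(T).
\]
The only additional point to check is that $\sup_{\theta\in\mathbb{R}}\|\mathfrak I(e^{\textup i\theta}T)\|=\omega(T)$, which follows at once from \eqref{eq_w_re} via the change of variable $\theta\mapsto\theta-\pi/2$, since $\mathfrak I(e^{\textup i\theta}T)=\mathfrak R(e^{\textup i(\theta-\pi/2)}T)$. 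With that in hand the same squeeze argument closes the proof.

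There is no substantive obstacle here; the result is essentially a squeeze lemma for the integrands of \eqref{2} and \eqref{3} once one exploits the rotational invariance of $\omega(\cdot)$. The only minor care needed is the identification $\mathfrak I(e^{\textup i\theta}T)=\mathfrak R(e^{\textup i(\theta-\pi/2)}T)$ to legitimize applying \eqref{eq_w_re} in the second identity.
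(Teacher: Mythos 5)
Your proof is correct and follows essentially the same route as the paper: apply \eqref{2} and \eqref{3} with $T$ replaced by $e^{\textup i\theta}T$, use rotation invariance of $\omega(\cdot)$ on the right and the identity \eqref{eq_w_re} on the left, and take the supremum over $\theta$. Your explicit justification that $\sup_{\theta}\|\mathfrak I(e^{\textup i\theta}T)\|=\omega(T)$ via $\mathfrak I(e^{\textup i\theta}T)=\mathfrak R(e^{\textup i(\theta-\pi/2)}T)$ is a correct spelling-out of a step the paper merely states.
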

\begin{proof}
Replacing $T$ by ${{e}^{\textup i\theta }}T$  in \eqref{2}, we get
\[\left\| {{\operatorname{\mathfrak Re}}^{\textup i\theta }}T \right\|\le \int\limits_{0}^{1}{\omega \left( \left( 1-t \right){{e}^{\textup i\theta }}T+t{{e}^{-\textup i\theta }}{{T}^{*}} \right)dt}\le \omega \left( T \right).\]
Taking the supremum over $\theta \in \mathbb{R}$,  \eqref{eq_w_re} implies the first identity. The second identity follows from \eqref{3} and noting that
\[\underset{\theta \in \mathbb{R}}{\mathop{\sup }}\,\left\| \mathfrak I{{e}^{\textup i\theta }}T \right\|=\omega \left( T \right).\]
\end{proof}

The following result involves an integral refinement of the second inequality in \eqref{eq_equiv_norms}.
\begin{proposition}
Let $T\in \mathcal B\left( \mathcal H \right)$. Then
\[\omega \left( T \right)\le \min \left\{ {{\lambda }_{1}},{{\lambda }_{2}} \right\}\le \left\| T \right\|,\]
where 
\[{{\lambda }_{1}}=\underset{\theta \in \mathbb{R}}{\mathop{\sup }}\,\left( \int\limits_{0}^{1}{\left\| \left( 1-t \right){{e}^{\textup i\theta }}T+t{{e}^{-\textup i\theta }}{{T}^{*}} \right\|dt} \right)\text{ and }{{\lambda }_{2}}=\underset{\theta \in \mathbb{R}}{\mathop{\sup }}\,\left( \int\limits_{0}^{1}{\left\| \left( 1-t \right){{e}^{-\textup i\theta }}{{T}^{*}}-t{{e}^{\textup i\theta }}T \right\|dt} \right).\]
\end{proposition}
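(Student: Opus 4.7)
The plan is to mimic exactly the reasoning used in the preceding corollary, but with the norm inequalities \eqref{15} and \eqref{16} in place of the numerical radius inequalities \eqref{2} and \eqref{3}. The identity \eqref{eq_w_re} will bridge the left-hand side (which will produce $\omega(T)$ after taking a supremum) and the right-hand side (which already reduces to $\|T\|$ uniformly in $\theta$).

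First, I would replace $T$ by $e^{\mathrm i\theta}T$ in \eqref{15}. Since $(e^{\mathrm i\theta}T)^{*}=e^{-\mathrm i\theta}T^{*}$ and $\|e^{\mathrm i\theta}T\|=\|T\|$, this substitution gives
\[
\bigl\|\mathfrak{R}\,e^{\mathrm i\theta}T\bigr\|\le \int_{0}^{1}\bigl\|(1-t)e^{\mathrm i\theta}T+te^{-\mathrm i\theta}T^{*}\bigr\|\,dt\le \|T\|,
\]
valid for every $\theta\in\mathbb R$. Taking the supremum over $\theta$ on all three sides and applying \eqref{eq_w_re} on the left, while observing that the right-hand bound $\|T\|$ is independent of $\theta$, yields $\omega(T)\le \lambda_{1}\le \|T\|$.

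Next, I would apply the same trick to \eqref{16}. Replacing $T$ by $e^{\mathrm i\theta}T$ gives
\[
\bigl\|\mathfrak{I}\,e^{\mathrm i\theta}T\bigr\|\le \int_{0}^{1}\bigl\|(1-t)e^{-\mathrm i\theta}T^{*}-te^{\mathrm i\theta}T\bigr\|\,dt\le \|T\|,
\]
and taking $\sup_{\theta}$ on the left, together with the elementary identity $\sup_{\theta\in\mathbb R}\|\mathfrak{I}\,e^{\mathrm i\theta}T\|=\omega(T)$ (already recorded in the previous corollary's proof), produces $\omega(T)\le \lambda_{2}\le \|T\|$. Combining the two chains gives $\omega(T)\le \min\{\lambda_{1},\lambda_{2}\}\le \|T\|$, as required.

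There is no serious obstacle here: the proof is a straight substitution $T\mapsto e^{\mathrm i\theta}T$ in the earlier norm refinements, followed by a supremum and an appeal to the well-known \eqref{eq_w_re}. The only minor point to keep in mind is that the outer supremum passes through the integral on the right because the upper bound $\|T\|$ is $\theta$-independent, so no dominated-convergence argument is needed.
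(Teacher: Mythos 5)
Your proposal is correct and follows essentially the same route as the paper: substitute $T\mapsto e^{\textup i\theta}T$ in \eqref{15} and \eqref{16}, take the supremum over $\theta$, and invoke \eqref{eq_w_re} (together with $\sup_{\theta}\|\mathfrak{I}e^{\textup i\theta}T\|=\omega(T)$) on the left while the right-hand bound $\|T\|$ is $\theta$-independent. No gaps.
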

\begin{proof}
By the inequality \eqref{15}, we have 
\begin{equation*}
\sup_{\theta \in \mathbb{R}}\|{{\operatorname{\mathfrak Re}}^{\textup i\theta }}T\|\le \sup_{\theta \in \mathbb{R}}\left( \int\limits_{0}^{1}{\left\| \left( 1-t \right)e^{i\theta}T+t{e^{-i\theta}{T}^{*}} \right\|dt}\right)\le \|T\|.
\end{equation*}
Finally, by \eqref{eq_w_re} we get
\begin{equation*}
 \omega(T)\le\sup_{\theta \in \mathbb{R}}\left( \int\limits_{0}^{1}{\left\| \left( 1-t \right)e^{i\theta}T+t{e^{-i\theta}{T}^{*}} \right\|dt}\right)\le \left\| T \right\|.
\end{equation*}
By a similar proof and with the help of \eqref{16}, we also have
\begin{equation*}
\omega(T)\le \sup_{\theta\in \mathbb{R}}\left( \int\limits_{0}^{1}{\left\| \left( 1-t \right){e^{-i\theta}{T}^{*}}-te^{i\theta}T \right\|dt}\right)\le \left\| T \right\|.
\end{equation*}
This completes the proof.
\end{proof}

The second inequality in the inequalities \eqref{2} and \eqref{3} can be reversed as follows. 
\begin{proposition}
Let $T\in \mathcal B\left( \mathcal H \right)$. Then 
\[\frac{1}{2}\omega \left( T \right)\le \left\{ \begin{aligned}
  & \int\limits_{0}^{1}{\omega \left( \left( 1-t \right)T+t{{T}^{*}} \right)dt}, \\ 
 & \int\limits_{0}^{1}{\omega \left( \left( 1-t \right){{T}^{*}}-tT \right)dt}. \\ 
\end{aligned} \right.\]
\end{proposition}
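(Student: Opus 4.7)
The plan is to exploit the fact that $\omega(\cdot)$ is a norm, so the reverse triangle inequality applies. Since $\omega(T^*)=\omega(T)$ and $\omega$ is homogeneous, for $t\in[0,1]$ one has $\omega((1-t)T)=(1-t)\omega(T)$ and $\omega(tT^*)=t\omega(T)$. The reverse triangle inequality then yields
\[
\omega\bigl((1-t)T+tT^{*}\bigr)\ge \bigl|\omega((1-t)T)-\omega(tT^{*})\bigr|=|1-2t|\,\omega(T).
\]
This is the key pointwise estimate.

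Having established the pointwise bound, I would integrate both sides over $[0,1]$ and use the elementary computation
\[
\int_{0}^{1}|1-2t|\,dt=\frac{1}{2},
\]
which immediately gives
\[
\int_{0}^{1}\omega\bigl((1-t)T+tT^{*}\bigr)\,dt\ge \frac{1}{2}\omega(T).
\]
For the second half of the claim, the same reverse triangle idea works: writing $(1-t)T^{*}-tT$ as the sum of $(1-t)T^{*}$ and $-tT$, one obtains
\[
\omega\bigl((1-t)T^{*}-tT\bigr)\ge \bigl|(1-t)\omega(T^{*})-t\omega(T)\bigr|=|1-2t|\,\omega(T),
\]
and integrating produces the analogous lower bound.

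There is no real obstacle here; the only point requiring a moment of care is recognizing that the Hermite--Hadamard route used for the upper bound in \eqref{2} and \eqref{3} cannot produce this lower bound, since $f(0)=f(1)=\omega(T)$ makes the natural left endpoint of Hermite--Hadamard equal to $\|\mathfrak R T\|$ rather than $\tfrac12\omega(T)$. One must therefore switch from convexity to the reverse triangle inequality, which is the clean mechanism that gives the factor $\tfrac12$ through the mean value of $|1-2t|$.
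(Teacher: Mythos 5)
Your proof is correct and follows essentially the same route as the paper: the paper's argument is exactly the pointwise bound $\left|1-2t\right|\omega(T)\le\min\left\{\omega\left((1-t)T+tT^{*}\right),\,\omega\left((1-t)T^{*}-tT\right)\right\}$ followed by integration over $[0,1]$, and your reverse-triangle-inequality derivation (using $\omega(T^{*})=\omega(T)$ and homogeneity) is precisely the justification of the step the paper dismisses as ``easily shown.'' No gaps; the computation $\int_{0}^{1}\left|1-2t\right|dt=\tfrac{1}{2}$ completes the argument just as in the paper.
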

\begin{proof}
For any $0\le t\le 1$, it can be easily shown that
\[\left| 1-2t \right|\omega \left( T \right)\le \min \left\{ \omega \left( \left( 1-t \right)T+t{{T}^{*}} \right),\omega \left( \left( 1-t \right){{T}^{*}}-tT \right) \right\}.\]
Integrating this over the interval $[0,1]$ implies the desired result.
\end{proof}

The following result holds as well.
\begin{theorem}
	Let $T\in \mathcal B\left( \mathcal H \right)$. Then
\begin{equation*}
\left\| T \right\|\le  2\int\limits_{0}^{1}{\left\| \left( 1-t \right)T+t{{T}^{*}} \right\|dt}\le2 \left\| T \right\|,
\end{equation*}
and
\[\omega \left( T \right)\le 2\int\limits_{0}^{1}{\omega \left( \left( 1-t \right)T+t{{T}^{*}} \right)dt}\le2\omega \left( T \right).\]
\end{theorem}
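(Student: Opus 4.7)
The plan is to handle the upper and lower bounds separately, treating both $N=\|\cdot\|$ and $N=\omega(\cdot)$ simultaneously by exploiting the fact that each is a norm on $\mathcal B(\mathcal H)$ satisfying $N(T)=N(T^{*})$. First I would dispatch the upper bounds, which are immediate from the triangle inequality: since
\[N((1-t)T+tT^{*})\le(1-t)N(T)+tN(T^{*})=N(T),\]
integration over $[0,1]$ yields $2\int_{0}^{1}N((1-t)T+tT^{*})\,dt\le 2N(T)$ for both norms at once.

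For the lower bounds, which are the genuinely interesting direction, I would base the argument on the algebraic identity
\[T=[(1-t)T+tT^{*}]+t(T-T^{*}).\]
Applying the triangle inequality for $N$ and using $N(T-T^{*})\le N(T)+N(T^{*})=2N(T)$ gives
\[N(T)\le N((1-t)T+tT^{*})+2t\,N(T),\]
whence $(1-2t)N(T)\le N((1-t)T+tT^{*})$ for $t\in[0,1/2]$. The companion identity $T^{*}=[(1-t)T+tT^{*}]+(1-t)(T^{*}-T)$ combined with the same reasoning yields $(2t-1)N(T)\le N((1-t)T+tT^{*})$ for $t\in[1/2,1]$, so together,
\[|1-2t|\,N(T)\le N((1-t)T+tT^{*}),\qquad t\in[0,1].\]
Integrating this pointwise bound and using the elementary computation $\int_{0}^{1}|1-2t|\,dt=\tfrac{1}{2}$ delivers $N(T)\le 2\int_{0}^{1}N((1-t)T+tT^{*})\,dt$, which is the lower half of both assertions.

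No serious obstacle arises: the only point meriting attention is the recognition of the decomposition $T-[(1-t)T+tT^{*}]=t(T-T^{*})$, whose norm is controlled by $2tN(T)$; this is precisely what produces the $(1-2t)$ factor and, after integration, the sharp constant $\tfrac{1}{2}$. Since $\|\cdot\|$ and $\omega(\cdot)$ share identical symmetry and triangle behaviour, a single unified argument settles both the operator-norm and the numerical-radius statements.
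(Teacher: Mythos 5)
Your proof is correct for both assertions, since each of $\|\cdot\|$ and $\omega(\cdot)$ is a norm on $\mathcal B(\mathcal H)$ that is invariant under taking adjoints, and your upper bounds are obtained exactly as in the paper (triangle inequality plus integration). For the lower bound, though, your route differs from the paper's proof of this theorem: you derive the pointwise estimate $|1-2t|\,N(T)\le N\left((1-t)T+tT^{*}\right)$ directly from the decomposition $T=[(1-t)T+tT^{*}]+t(T-T^{*})$ (and its companion for $T^{*}$), and then integrate using $\int_{0}^{1}|1-2t|\,dt=\tfrac{1}{2}$. The paper instead invokes convexity of $h(X)=\|X\|$ together with the symmetrized identities $(1-2t)T=(1-t)A+tB$ and $(2t-1)T^{*}=(1-t)B+tA$, where $A=(1-t)T+tT^{*}$ and $B=-(1-t)T^{*}-tT$, which yields the two-term inequality $|1-2t|\left(\|T\|+\|T^{*}\|\right)\le\|(1-t)T+tT^{*}\|+\|(1-t)T^{*}+tT\|$ before integrating. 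Your pointwise bound is slightly stronger (it controls each summand separately, and applying it to both $T$ and $T^{*}$ recovers the paper's inequality); it is in fact the same elementary estimate the paper asserts without proof in the preceding proposition for $\omega$. What the paper's symmetrized convexity argument buys is coherence with the convex-function framework that runs through the article, whereas your argument is more elementary and self-contained.
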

\begin{proof}
Let $h(T)=\|T\|$ for any $T\in \mathcal B\left( \mathcal H \right)$. Then, $h$ is a convex function on $\mathcal B\left( \mathcal  H \right)$. For each $t\in [0, 1],$ we have 
\begin{equation*}
h((1-2t)T)+h((2t-1)T^*)=h((1-t)A+tB)+h((1-t)B+tA),
\end{equation*}
where $A=(1-t)T+tT^*$ and $B=-(1-t)T^*-tT$. Then, 
\[ \begin{aligned}
h((1-2t)T)+h((2t-1)T^*)&=h((1-t)A+tB)+h((1-t)B+tA)\\
&\le(1-t)h(A)+th(B)+(1-t)h(B)+th(A)\\
&=h(A)+h(B)\nonumber \\
&=h((1-t)T+tT^*)+h(-(1-t)T^*-tT)\\
&=h((1-t)T+tT^*)+h((1-t)T^*+tT).
\end{aligned} \]
Integrating, the previous inequality, from $t=0$ to $t=1$, we obtain
\begin{equation*}
\int_0^1|1-2t|(\|T\|+\|T^*\|)\:dt\le 2\int_0^1\|(1-t)T+tT^*\|\:dt.
\end{equation*}
Thus, 
\begin{equation*}
\|T\|=\frac12 (\|T\|+\|T^*\|)\le 2\int_0^1\|(1-t)T+tT^*\|\:dt.
\end{equation*}
On the other hand,
\begin{align*}
\|(1-t)T+tT^*\|&\le (1-t)\|T\|+t\|T^*\|=\|T\|; 0\le t\le 1.
\end{align*}
Integrating this last inequality and then multiplying by 2 complete the proof of the first inequality. The second inequality is proved similarly.
\end{proof}

Continuing with the convexity of the norms, the inequality \eqref{6} may be used to get the following refinement of the first inequality in \eqref{eq_equiv_norms}.
\begin{theorem}\label{9}
Let $T\in \mathcal B\left( \mathcal H \right)$. Then for any $0\le t\le 1$,
\[\frac{1}{2}\left\| T \right\|+\frac{1}{4R}\left( 2\omega \left( T \right)-\left( \omega \left( \left( 1-t \right){{T}^{*}}-tT \right)+\omega \left( \left( 1-t \right)T+t{{T}^{*}} \right) \right) \right)\le \omega \left( T \right),\]
where $R=\max \left\{ t,1-t \right\}$.
\end{theorem}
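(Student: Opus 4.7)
The plan is to derive the stated inequality by applying \eqref{6} twice---once directly to $T$ and once to $-\textup iT$---and then combining the two resulting estimates via the Cartesian decomposition. Each application will produce a lower bound for one of $\omega(T)-\|\mathfrak RT\|$ and $\omega(T)-\|\mathfrak IT\|$, and the final ingredient will be the triangle-inequality bound $\|T\|\le\|\mathfrak RT\|+\|\mathfrak IT\|$ coming from $T=\mathfrak RT+\textup i\mathfrak IT$.

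First, I would read off the left inequality in \eqref{6} for $T$ itself:
\[\frac{\omega(T)-\omega((1-t)T+tT^{*})}{2R}\le \omega(T)-\|\mathfrak RT\|.\]
Next, I would substitute $T\mapsto -\textup iT$ in \eqref{6}. A short computation gives $\mathfrak R(-\textup iT)=\mathfrak IT$ (so $\|\mathfrak R(-\textup iT)\|=\|\mathfrak IT\|$), $\omega(-\textup iT)=\omega(T)$, and
\[(1-t)(-\textup iT)+t(-\textup iT)^{*}=-\textup i\bigl((1-t)T-tT^{*}\bigr),\]
whose numerical radius equals $\omega((1-t)T-tT^{*})$; using $\omega(S)=\omega(S^{*})$ this equals $\omega((1-t)T^{*}-tT)$. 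Therefore \eqref{6} applied to $-\textup iT$ yields
\[\frac{\omega(T)-\omega((1-t)T^{*}-tT)}{2R}\le \omega(T)-\|\mathfrak IT\|.\]

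Adding the two displayed inequalities produces
\[\frac{2\omega(T)-\omega((1-t)T+tT^{*})-\omega((1-t)T^{*}-tT)}{2R}\le 2\omega(T)-\|\mathfrak RT\|-\|\mathfrak IT\|,\]
and the triangle inequality $\|T\|\le\|\mathfrak RT\|+\|\mathfrak IT\|$ upgrades the right-hand side to $2\omega(T)-\|T\|$. Dividing by $2$ and rearranging gives exactly the inequality in the statement.

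The only genuine subtlety is choosing the correct unit-modulus scalar in the second application of \eqref{6}: one must use $-\textup i$ (rather than $\textup i$) so that the block inside the numerical radius comes out as $(1-t)T^{*}-tT$ up to an overall phase that $\omega(\cdot)$ ignores, and simultaneously so that the Cartesian part produced is $\mathfrak IT$. Once that substitution is pinned down, every remaining step is a direct manipulation of known identities.
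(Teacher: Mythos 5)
Your proposal is correct and follows essentially the same route as the paper: both apply \eqref{6} once to $T$ and once to a unimodular rotation of it (you use $-\textup iT$, the paper uses $\textup iT^*$, which yield the same second estimate since $\omega(S)=\omega(S^*)$), and then combine the two bounds with the triangle inequality $\|T\|\le\|\mathfrak RT\|+\|\mathfrak IT\|$ from the Cartesian decomposition. The computations of $\mathfrak R(-\textup iT)=\mathfrak IT$ and of the rotated convex combination are accurate, so the argument is complete.
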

\begin{proof}
The first inequality in \eqref{6} can be written as
\begin{equation}\label{4}
\left\| \mathfrak RT \right\|+\frac{\omega \left( T \right)-\omega \left( \left( 1-t \right)T+t{{T}^{*}} \right)}{2R}\le \omega \left( T \right).
\end{equation}
Replacing $\textup i{{T}^{*}}$ by $T$, we infer that
\begin{equation}\label{5}
\left\| \mathfrak IT \right\|+\frac{\omega \left( T \right)-\omega \left( \left( 1-t \right){{T}^{*}}-tT \right)}{2R}\le \omega \left( T \right).
\end{equation}
By \eqref{4} and \eqref{5}, we get
\[\begin{aligned}
  & \frac{1}{2}\left\| T \right\|+\frac{1}{4R}\left( 2\omega \left( T \right)-\left( \omega \left( \left( 1-t \right){{T}^{*}}-tT \right)+\omega \left( \left( 1-t \right)T+t{{T}^{*}} \right) \right) \right) \\ 
 & =\frac{1}{2}\left\| \mathfrak RT+\textup i\mathfrak IT \right\|+\frac{1}{4R}\left( 2\omega \left( T \right)-\left( \omega \left( \left( 1-t \right){{T}^{*}}-tT \right)+\omega \left( \left( 1-t \right)T+t{{T}^{*}} \right) \right) \right) \\ 
 & \le \frac{1}{2}\left( \left\| \mathfrak RT \right\|+\left\| \mathfrak IT \right\| \right)+\frac{1}{4R}\left( 2\omega \left( T \right)-\left( \omega \left( \left( 1-t \right){{T}^{*}}-tT \right)+\omega \left( \left( 1-t \right)T+t{{T}^{*}} \right) \right) \right) \\ 
 &\qquad\text{(by the triangle inequality for the usual operator norm)}\\
 & \le \omega \left( T \right).
\end{aligned}\]
This completes the proof.
\end{proof}

As a consequence of Theorem \ref{9}, we get the following corollaries. Our results considerably refines \cite[(4.3)]{4} and \cite[(4.2)]{4}, respectively.
\begin{corollary}\label{refwn}
	Let $T\in \mathcal B\left( \mathcal H \right)$. Then,
\begin{equation*}
\frac{1}{2}\left\| T \right\|+\frac{1}{2}\Big| \|\mathfrak IT \|-\|\mathfrak RT \|\Big| \le
\frac{1}{2}\left\| T \right\|+\frac{1}{2}\left( 2\omega \left( T \right)-\left( \|\mathfrak IT \|+\|\mathfrak RT \| \right)\right)\le \omega \left( T \right).	
\end{equation*}
\end{corollary}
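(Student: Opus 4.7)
The plan is to deduce Corollary \ref{refwn} from Theorem \ref{9} by the single choice $t=\tfrac{1}{2}$, and then handle the lower bound via an elementary rewriting of $|a-b|+a+b$ as $2\max\{a,b\}$.

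First I would substitute $t=\tfrac{1}{2}$ in Theorem \ref{9}. This gives $R=\tfrac{1}{2}$, hence $\tfrac{1}{4R}=\tfrac{1}{2}$. The two matrix combinations collapse as follows:
\[
(1-t)T+tT^{*}\Big|_{t=1/2}=\mathfrak RT,\qquad (1-t)T^{*}-tT\Big|_{t=1/2}=-\textup i\,\mathfrak IT.
\]
Since $\mathfrak RT$ and $\mathfrak IT$ are self-adjoint (hence normal), their numerical radii equal their operator norms, so
\[
\omega\bigl((1-t)T+tT^{*}\bigr)=\|\mathfrak RT\|,\qquad \omega\bigl((1-t)T^{*}-tT\bigr)=\|\mathfrak IT\|.
\]
Plugging into Theorem \ref{9} then produces the right-hand inequality
\[
\tfrac{1}{2}\|T\|+\tfrac{1}{2}\bigl(2\omega(T)-(\|\mathfrak IT\|+\|\mathfrak RT\|)\bigr)\le \omega(T).
\]

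For the left-hand inequality, after cancelling $\tfrac{1}{2}\|T\|$ from both sides, the claim reduces to
\[
\bigl|\,\|\mathfrak IT\|-\|\mathfrak RT\|\,\bigr|+\|\mathfrak IT\|+\|\mathfrak RT\|\le 2\omega(T).
\]
Using the elementary identity $|a-b|+a+b=2\max\{a,b\}$ for non-negative $a,b$, this is exactly $\max\{\|\mathfrak RT\|,\|\mathfrak IT\|\}\le\omega(T)$, which is precisely \eqref{eq_norm_reim}.

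Neither step presents a real obstacle; the only subtlety is to recognise that the $t=\tfrac{1}{2}$ choice makes the two numerical radius terms computable in closed form, thereby converting the one-parameter statement of Theorem \ref{9} into the clean inequality displayed in the corollary. The conclusion is obtained by concatenating the two chains of inequalities.
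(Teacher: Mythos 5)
Your proposal is correct and follows essentially the same route as the paper: the right-hand inequality is obtained from Theorem \ref{9} with $t=\tfrac12$ (using $\omega(\mathfrak RT)=\|\mathfrak RT\|$ and $\omega(-\textup i\,\mathfrak IT)=\|\mathfrak IT\|$), and the left-hand inequality reduces to $\max\{\|\mathfrak RT\|,\|\mathfrak IT\|\}\le\omega(T)$, i.e.\ \eqref{eq_norm_reim}, which is exactly what the paper's triangle-inequality manipulation amounts to. Your identity $|a-b|+a+b=2\max\{a,b\}$ is just a slightly cleaner packaging of that same step.
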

\begin{proof}
The second inequality can be deduced from Theorem \ref{9} with $t=\frac12.$ On the other hand, we have 
\[\begin{aligned}
\frac{1}{2}\Big| \|\mathfrak IT \|-\|\mathfrak RT \|\Big|=& \frac{1}{2}\Big| \|\mathfrak IT \|-\omega(T)+\omega(T)-|\mathfrak RT \|\Big| \\ 
&\le\frac{1}{2}\left(\Big| \|\mathfrak IT \|-\omega(T)\Big|+\Big|\omega(T)-|\mathfrak RT \|\Big|\right) \\
&= \frac{1}{2}\left( \omega(T)-\|\mathfrak IT \|+\omega(T)-\|\mathfrak RT \|\right),\quad({\text{by\;the\;inequality\;\eqref{eq_w_re}}}). \nonumber\ 
\end{aligned}\]	
This completes the proof.
\end{proof}

As a consequence of Corollary \ref{refwn}, we characterize when the numerical radius to be equal to half the operator norm. The following result is related to Theorem 3.1 previously obtained by Yamazaki in \cite{3}.
\begin{proposition}
	Let $T\in \mathcal B\left( \mathcal H \right)$. Then, $\frac{\|T\|}{2}=\omega(T)$ if and only if $\|\mathfrak Ie^{\textup i\theta}T\|=\|\mathfrak Re^{\textup i\theta}T\|=\frac{\|T\|}{2}$ for any $\theta \in \mathbb{R}.$
\end{proposition}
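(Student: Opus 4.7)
The plan is to use Corollary \ref{refwn} in a rotation-invariant fashion, applying it not to $T$ but to the rotated operator $e^{\textup i\theta}T$ for each $\theta\in\mathbb{R}$. Both $\|\cdot\|$ and $\omega(\cdot)$ are invariant under this rotation, so the chain of inequalities in Corollary \ref{refwn} becomes a statement about $\|\mathfrak{R}e^{\textup i\theta}T\|$ and $\|\mathfrak{I}e^{\textup i\theta}T\|$ that is forced to be tight exactly when $\omega(T)=\|T\|/2$.

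For the \emph{if} direction I would simply invoke the identity \eqref{eq_w_re}: from the hypothesis $\|\mathfrak{R}e^{\textup i\theta}T\|=\|T\|/2$ for every real $\theta$, taking the supremum over $\theta$ gives $\omega(T)=\|T\|/2$ in one line.

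For the \emph{only if} direction I would substitute $e^{\textup i\theta}T$ in place of $T$ in Corollary \ref{refwn}, using $\omega(e^{\textup i\theta}T)=\omega(T)=\|T\|/2$ and $\|e^{\textup i\theta}T\|=\|T\|$, to arrive at
\[
\frac{\|T\|}{2}+\frac{1}{2}\Big|\|\mathfrak{I}e^{\textup i\theta}T\|-\|\mathfrak{R}e^{\textup i\theta}T\|\Big|\le \frac{\|T\|}{2}+\frac{1}{2}\big(2\omega(T)-(\|\mathfrak{I}e^{\textup i\theta}T\|+\|\mathfrak{R}e^{\textup i\theta}T\|)\big)\le \omega(T)=\frac{\|T\|}{2}.
\]
Comparing the outer terms collapses the absolute value to $0$, which yields $\|\mathfrak{I}e^{\textup i\theta}T\|=\|\mathfrak{R}e^{\textup i\theta}T\|$. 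Comparing the middle and right terms and cancelling $\|T\|/2=\omega(T)$ gives $\|\mathfrak{I}e^{\textup i\theta}T\|+\|\mathfrak{R}e^{\textup i\theta}T\|\ge \|T\|$, so each real part norm is $\ge \|T\|/2$. The matching upper bound comes from \eqref{eq_norm_reim}, namely $\|\mathfrak{R}e^{\textup i\theta}T\|\le \omega(e^{\textup i\theta}T)=\|T\|/2$, yielding the required equality for every $\theta$.

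There is no real obstacle here; the proposition is essentially a rigidity reading of Corollary \ref{refwn}. The only point that needs some care is to make the argument uniform in $\theta$—the conclusion must hold for all real $\theta$, not merely at $\theta=0$—which is precisely why one applies the corollary to $e^{\textup i\theta}T$ rather than to $T$ and why the rotation-invariance of $\omega(\cdot)$ and $\|\cdot\|$ is essential.
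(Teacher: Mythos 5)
Your proposal is correct and follows essentially the same route as the paper: the ``if'' direction is the one-line supremum argument via \eqref{eq_w_re}, and the ``only if'' direction applies Corollary \ref{refwn} to $e^{\textup i\theta}T$, using rotation invariance of $\|\cdot\|$ and $\omega(\cdot)$ to squeeze the chain of inequalities into equalities. The only cosmetic difference is that you finish with the upper bound \eqref{eq_norm_reim} on each Cartesian part, while the paper reads both equalities ($\|\mathfrak Ie^{\textup i\theta}T\|=\|\mathfrak Re^{\textup i\theta}T\|$ and $\|\mathfrak Ie^{\textup i\theta}T\|+\|\mathfrak Re^{\textup i\theta}T\|=2\omega(T)$) directly off the collapsed chain; the two finishes are equivalent.
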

\begin{proof}
	If $\|\mathfrak Ie^{\textup i\theta}T\|=\|\mathfrak Re^{\textup i\theta}T\|=\frac{\|T\|}{2}$ for any $\theta \in \mathbb{R}$, then by \eqref{eq_w_re} we conclude that $\omega(T)=\frac{\|T\|}{2}$. Conversely, we suppose that $\omega(T)=\frac{\|T\|}{2}$, thus from Corollary \ref{refwn} we conclude that 
\[	\frac{1}{2}\left\| T \right\|=\frac{1}{2}\left\| T \right\|+\frac{1}{2}\Big| \|\mathfrak IT \|-\|\mathfrak RT \|\Big|=
	\frac{1}{2}\left\| T \right\|+\frac{1}{2}\left( 2\omega \left( T \right)-\left( \|\mathfrak IT \|+\|\mathfrak RT \| \right)\right)= \omega \left( T \right).\]
	If we replace $T$ for $e^{\textup i\theta}T$ with $\theta \in \mathbb{R}$, we have 
	\[\frac{1}{2}\left\| T \right\|=	\frac{1}{2}\left\| T \right\|+\frac{1}{2}\Big| \|\mathfrak Ie^{\textup i\theta}T \|-\|\mathfrak Re^{\textup i\theta}T \|\Big|=
	\frac{1}{2}\left\| T \right\|+\frac{1}{2}\left( 2\omega \left( T \right)-\left( \|\mathfrak Ie^{\textup i\theta}T \|+\|\mathfrak Re^{\textup i\theta}T \| \right)\right)= \omega \left( T \right).\]
	This implies that $\|\mathfrak Ie^{\textup i\theta}T \|=\|\mathfrak Re^{\textup i\theta}T \|$ and $2\omega(T)=\|\mathfrak Ie^{\textup i\theta}T \|+\|\mathfrak Re^{\textup i\theta}T \|,$ i.e. for any $\theta \in \mathbb{R}$ we get
	$$\|\mathfrak Ie^{\textup i\theta}T \|=\|\mathfrak Re^{\textup i\theta}T \|=\frac{\|T\|}{2}.$$
	\end{proof}

\begin{corollary}
Let $A,B\in \mathcal B\left( \mathcal H \right)$. Then, 
\begin{eqnarray}
\omega \left( \left[ \begin{matrix}
O & A  \\
{{B}} & O  \\
\end{matrix} \right]\right)&\geq&
\frac{1}{2}\left\| \left[ \begin{matrix}
O & A  \\
{{B}} & O  \\
\end{matrix} \right] \right\|+\frac{1}{2}\left( 2\omega \left( \left[ \begin{matrix}
O & A  \\
{{B}} & O  \\
\end{matrix} \right] \right)-\left( \|A-B^* \|+\| A+B^* \| \right)\right)\nonumber\\
&\geq&\frac{1}{2}\left\| \left[ \begin{matrix}
O & A  \\
{{B}} & O  \\
\end{matrix} \right] \right\|+\frac{1}{2}\Big| \|A-B^*\|-\|A+B^*\|\Big|.\nonumber\\ 
\nonumber\ \end{eqnarray}
\end{corollary}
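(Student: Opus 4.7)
The plan is to apply Corollary \ref{refwn} directly to the off-diagonal block operator $T = \left[\begin{matrix} O & A \\ B & O \end{matrix}\right]$ and to evaluate the norms of its Cartesian parts explicitly by means of the block-norm identity \eqref{eq_norm_blocks}.

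First, since $T^* = \left[\begin{matrix} O & B^* \\ A^* & O \end{matrix}\right]$, I would compute
\[
\mathfrak{R}T = \frac{1}{2}\left[\begin{matrix} O & A+B^* \\ (A+B^*)^* & O \end{matrix}\right]\quad\text{and}\quad \mathfrak{I}T = \left[\begin{matrix} O & \dfrac{A-B^*}{2\textup{i}} \\[4pt] \left(\dfrac{A-B^*}{2\textup{i}}\right)^{\!*} & O \end{matrix}\right],
\]
observing in the imaginary part that the $(2,1)$ entry $(B-A^*)/(2\textup{i})$ indeed coincides with the adjoint of the $(1,2)$ entry. Both of these are off-diagonal self-adjoint blocks, so \eqref{eq_norm_blocks} immediately gives
\[
\|\mathfrak{R}T\| = \tfrac{1}{2}\|A+B^*\|\qquad\text{and}\qquad \|\mathfrak{I}T\| = \tfrac{1}{2}\|A-B^*\|.
\]

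Substituting these two identifications into the chain of inequalities in Corollary \ref{refwn} then produces the claim: the right-hand inequality of Corollary \ref{refwn} gives the first asserted inequality, involving the penalty term $\frac{1}{2}(2\omega(T) - (\|\mathfrak{R}T\|+\|\mathfrak{I}T\|))$, while the left-hand inequality gives the second, via the absolute-difference term $\frac{1}{2}\bigl|\|\mathfrak{I}T\|-\|\mathfrak{R}T\|\bigr|$. The entire argument is thus a direct specialization; no analytic obstacle is expected beyond careful bookkeeping of the factors of $2$ and $\textup{i}$ in the Cartesian decomposition of the block matrix, and (should the exact scalar constants in the statement require adjustment) the substitution immediately reveals the sharp form of the bound.
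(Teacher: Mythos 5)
Your route is exactly the paper's: apply Corollary \ref{refwn} to $T=\left[\begin{matrix} O & A\\ B & O\end{matrix}\right]$ and evaluate the Cartesian parts via the block identity \eqref{eq_norm_blocks}; your computations $\|\mathfrak{R}T\|=\tfrac12\|A+B^*\|$ and $\|\mathfrak{I}T\|=\tfrac12\|A-B^*\|$ are correct. The one point you cannot leave to a parenthetical hedge is the constants: substituting these values into Corollary \ref{refwn} yields the chain with $\tfrac12\left(\|A-B^*\|+\|A+B^*\|\right)$ and $\tfrac12\bigl|\|A-B^*\|-\|A+B^*\|\bigr|$ appearing inside the outer factor $\tfrac12$, i.e.\ the corollary with $\|A\pm B^*\|$ replaced by $\tfrac12\|A\pm B^*\|$. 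The statement as printed (without these halves) is not what the substitution produces, and its second inequality is in fact false in general: taking $A=B^*=I$ gives $\omega(T)=\|T\|=1$, so the middle expression equals $\tfrac12$ while the right-hand expression equals $\tfrac32$. So your argument is sound and coincides with the paper's own one-line proof (which shares the same constant slip), but the ``adjustment of scalar constants'' you allude to is genuinely required, and the corrected, halved version is the statement your substitution actually establishes; it should be recorded explicitly rather than deferred.
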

\begin{proof}
	This follows clearly from Corollary \ref{refwn} by considering $T=\left[ \begin{matrix}
	O & A  \\
	{{B}} & O  \\
	\end{matrix} \right] $ and equality \eqref{eq_norm_blocks}.
\end{proof}

On the other hand, the reverse for the second inequality in \eqref{eq_w_re} may be obtained as follows. 
\begin{theorem}
Let $T\in \mathcal B\left( \mathcal H \right)$. Then for any $0\le t\le 1$,
\[\left\| T \right\|\le \omega \left( T \right)+\frac{\left\| T \right\|-\left\| \left( 1-t \right)T+t{{T}^{*}} \right\|}{2r}-\frac{\omega \left( T \right)-\omega \left( \left( 1-t \right)T+t{{T}^{*}} \right)}{2R},\]
where $r=\min \left\{ t,1-t \right\}$ and $R=\max \left\{ t,1-t \right\}$. In particular, 
\[\frac{\omega \left( T \right)-\omega \left( \left( 1-t \right)T+t{{T}^{*}} \right)}{2R}\le \frac{\left\| T \right\|-\left\| \left( 1-t \right)T+t{{T}^{*}} \right\|}{2r}.\]

\end{theorem}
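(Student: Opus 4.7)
The plan is to combine the upper bound from \eqref{ned_quo} with the lower bound from \eqref{6}, both anchored at the common quantity $\|\mathfrak R T\|$. This will chain the norm-side and numerical-radius-side estimates into a single comparison between $\|T\|$ and $\omega(T)$ plus two correction terms.

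First, I would isolate from the right-hand inequality in \eqref{ned_quo} the bound
\[\|T\|\le \|\mathfrak R T\|+\frac{\|T\|-\|(1-t)T+tT^*\|}{2r}.\]
Next, from the left-hand inequality in \eqref{6} I would extract
\[\|\mathfrak R T\|\le \omega(T)-\frac{\omega(T)-\omega((1-t)T+tT^*)}{2R}.\]
Substituting the second display into the first gives exactly the asserted main inequality. Essentially the proof is a two-line assembly: the convex-function inequalities recorded in \eqref{6} and \eqref{ned_quo}—which arise from the Mitroi-type refinements applied to $f(t)=\omega((1-t)T+tT^*)$ and $g(t)=\|(1-t)T+tT^*\|$—already do the heavy lifting, and $\|\mathfrak R T\|$ serves as a bridge between them.

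For the \emph{In particular} part, I would use the fact that $\omega(T)\le \|T\|$, which is the second inequality in \eqref{eq_equiv_norms}. Moving $\omega(T)$ to the left-hand side of the main inequality yields
\[0\le \|T\|-\omega(T)\le \frac{\|T\|-\|(1-t)T+tT^*\|}{2r}-\frac{\omega(T)-\omega((1-t)T+tT^*)}{2R},\]
and transposing the last two terms gives the desired conclusion.

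There is no real obstacle here: the only thing to notice is that the appropriate quantity to serve as the pivot is $\|\mathfrak R T\|$, and that one must use the \emph{upper} estimate from \eqref{ned_quo} together with the \emph{lower} estimate from \eqref{6}, so that the direction of the chain is preserved. The required fact $\omega(T)\le \|T\|$ for the corollary is immediate from \eqref{eq_equiv_norms}.
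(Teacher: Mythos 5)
Your proposal is correct and follows exactly the paper's own argument: you chain the right-hand inequality of \eqref{ned_quo} (giving $\|T\|\le\|\mathfrak R T\|+\tfrac{\|T\|-\|(1-t)T+tT^*\|}{2r}$) with the left-hand inequality of \eqref{6} (giving $\|\mathfrak R T\|\le\omega(T)-\tfrac{\omega(T)-\omega((1-t)T+tT^*)}{2R}$), using $\|\mathfrak R T\|$ as the pivot, and then deduce the second claim from $\omega(T)\le\|T\|$. Nothing further is needed.
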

\begin{proof}
The inequalities \eqref{6} and \eqref{ned_quo} imply
	\[\begin{aligned}
   \left\| T \right\|&\le \left\| \mathfrak RT \right\|+\frac{\left\| T \right\|-\left\| \left( 1-t \right)T+t{{T}^{*}} \right\|}{2r} \\ 
 & \le \omega \left( T \right)+\frac{\left\| T \right\|-\left\| \left( 1-t \right)T+t{{T}^{*}} \right\|}{2r}-\frac{\omega \left( T \right)-\omega \left( \left( 1-t \right)T+t{{T}^{*}} \right)}{2R}.  
\end{aligned}\]	
This proves the first assertion. The second assertion follows from the first, noting that $\omega(T)\le\|T\|.$
\end{proof}

Continuing with the theme of this paper, in the following result, the numerical radius of convex combinations of operator matrices is used to refine the triangle inequality, thanks to
\[\omega \left( \left[ \begin{matrix}
   O & \left( 1-t \right)A+tB  \\
   \left( 1-t \right){{B}^{*}}+t{{A}^{*}} & O  \\
\end{matrix} \right] \right)\le \omega \left( \left[ \begin{matrix}
   O & A  \\
   {{B}^{*}} & O  \\
\end{matrix} \right] \right); 0\le t\le 1.\]
\begin{theorem}\label{10}
Let $A,B\in \mathcal B\left( \mathcal H \right)$. Then for any $0\le t\le 1$,
\[\left\| A+B \right\|\le \left\| A \right\|+\left\| B \right\|-\frac{\omega \left( \left[ \begin{matrix}
   O & A  \\
   {{B}^{*}} & O  \\
\end{matrix} \right] \right)-\omega \left( \left[ \begin{matrix}
   O & \left( 1-t \right)A+tB  \\
   \left( 1-t \right){{B}^{*}}+t{{A}^{*}} & O  \\
\end{matrix} \right] \right)}{R},\]
where $R=\max \left\{ t,1-t \right\}$. 
\end{theorem}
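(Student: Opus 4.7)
My plan is to recognize the conclusion as the inequality \eqref{6} applied to the block operator
\[T=\begin{bmatrix}O&A\\B^{*}&O\end{bmatrix},\]
followed by the upper half of \eqref{eq_fuad_mos}. The key observation is that $T^{*}=\begin{bmatrix}O&B\\A^{*}&O\end{bmatrix}$, so
\[(1-t)T+tT^{*}=\begin{bmatrix}O&(1-t)A+tB\\(1-t)B^{*}+tA^{*}&O\end{bmatrix},\]
which is exactly the block appearing in the right-hand side of the target inequality; and the Cartesian real part is
\[\mathfrak{R}T=\tfrac{1}{2}\begin{bmatrix}O&A+B\\A^{*}+B^{*}&O\end{bmatrix}.\]
Since $(A+B)^{*}=A^{*}+B^{*}$, the identity \eqref{eq_norm_blocks} gives $\|\mathfrak{R}T\|=\tfrac{1}{2}\|A+B\|$, which is the crucial bridge between $\|\mathfrak{R}T\|$ (an abstract quantity) and the triangle-inequality side $\|A+B\|$ we actually want to bound.

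Next, I would apply the first inequality of \eqref{6} to this particular $T$. Written in the form \eqref{4}, it says
\[\|\mathfrak{R}T\|+\frac{\omega(T)-\omega((1-t)T+tT^{*})}{2R}\le \omega(T).\]
Substituting $\|\mathfrak{R}T\|=\tfrac{1}{2}\|A+B\|$ and multiplying through by $2$, one obtains
\[\|A+B\|+\frac{\omega(T)-\omega((1-t)T+tT^{*})}{R}\le 2\omega(T).\]

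Finally, I would invoke the right-hand inequality in \eqref{eq_fuad_mos}, namely $2\omega(T)\le \|A\|+\|B\|$ for this specific block $T$, and rearrange to produce
\[\|A+B\|\le \|A\|+\|B\|-\frac{\omega(T)-\omega((1-t)T+tT^{*})}{R},\]
which is the statement of the theorem. There is no real obstacle here once one sees the block substitution: the whole argument is just matching shapes. The only mild subtlety is that the refinement constant $R^{-1}$ (rather than $(2R)^{-1}$) arises because the factor of $2$ from $2\|\mathfrak{R}T\|=\|A+B\|$ cancels half of the denominator in \eqref{6}, which is what makes the resulting inequality a nontrivial strengthening of \eqref{eq_fuad_mos}.
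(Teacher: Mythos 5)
Your proposal is correct and follows essentially the same route as the paper: both apply the refined convexity inequality \eqref{6} (in the form \eqref{4}) to the block operator $T=\left[\begin{smallmatrix}O&A\\B^{*}&O\end{smallmatrix}\right]$, use $2\|\mathfrak{R}T\|=\|A+B\|$, and then bound $2\omega(T)$ by $\|A\|+\|B\|$. The only cosmetic difference is that you cite the right-hand inequality of \eqref{eq_fuad_mos} for the last step, whereas the paper re-derives it on the spot via \eqref{eq_w_re}, \eqref{eq_norm_blocks}, and the triangle inequality; both are valid.
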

\begin{proof}
Let $T=\left[ \begin{matrix}
   O & A  \\
   {{B}^{*}} & O  \\
\end{matrix} \right]$ on $\mathcal H\oplus \mathcal H$. Then by \eqref{6}, we can write  
{\footnotesize
\[\begin{aligned}
  & \left\| A+B \right\| \\ 
 & =\left\| T+{{T}^{*}} \right\| \\ 
 & =2\left\| \mathfrak RT \right\| \\ 
 & \le 2\omega \left( T \right)-\frac{\omega \left( T \right)-\omega \left( \left( 1-t \right)T+t{{T}^{*}} \right)}{R} \\ 
 & =2\underset{\theta \in \mathbb{R}}{\mathop{\sup }}\,\left\| \mathfrak R{{e}^{\textup i\theta }}T \right\|-\frac{\omega \left( T \right)-\omega \left( \left( 1-t \right)T+t{{T}^{*}} \right)}{R} \\ 
 & =\underset{\theta \in \mathbb{R}}{\mathop{\sup }}\,\left\| \left[ \begin{matrix}
   O & {{e}^{\textup i\theta }}A+{{e}^{-\textup i\theta }}B  \\
   {{e}^{\textup i\theta }}{{B}^{*}}+{{e}^{-\textup i\theta }}{{A}^{*}} & O  \\
\end{matrix} \right] \right\|-\frac{\omega \left( \left[ \begin{matrix}
   O & A  \\
   {{B}^{*}} & O  \\
\end{matrix} \right] \right)-\omega \left( \left[ \begin{matrix}
   O & \left( 1-t \right)A+tB  \\
   \left( 1-t \right){{B}^{*}}+t{{A}^{*}} & O  \\
\end{matrix} \right] \right)}{R} \\ 
 & =\underset{\theta \in \mathbb{R}}{\mathop{\sup }}\,\left\| {{e}^{\textup i\theta }}A+{{e}^{-\textup i\theta }}B \right\|-\frac{\omega \left( \left[ \begin{matrix}
   O & A  \\
   {{B}^{*}} & O  \\
\end{matrix} \right] \right)-\omega \left( \left[ \begin{matrix}
   O & \left( 1-t \right)A+tB  \\
   \left( 1-t \right){{B}^{*}}+t{{A}^{*}} & O  \\
\end{matrix} \right] \right)}{R} \\ 
 & \le \left\| A \right\|+\left\| B \right\|-\frac{\omega \left( \left[ \begin{matrix}
   O & A  \\
   {{B}^{*}} & O  \\
\end{matrix} \right] \right)-\omega \left( \left[ \begin{matrix}
   O & \left( 1-t \right)A+tB  \\
   \left( 1-t \right){{B}^{*}}+t{{A}^{*}} & O  \\
\end{matrix} \right] \right)}{R}, 
\end{aligned}\]}
where the triangle inequality for the operator norm has been used to obtain the last inequality. This completes the proof.
\end{proof}

\begin{remark}
Letting $T=\left[\begin{array}{cc}O&A\\B^*&O\end{array}\right]$, we have
\[\begin{aligned}
  & \omega \left( \left[ \begin{matrix}
   O & \left( 1-t \right)A+tB  \\
   \left( 1-t \right){{B}^{*}}+t{{A}^{*}} & O  \\
\end{matrix} \right] \right)+\omega \left( \left[ \begin{matrix}
   O & \left( 1-t \right)B-tA  \\
   \left( 1-t \right){{A}^{*}}-t{{B}^{*}} & O  \\
\end{matrix} \right] \right) \\ 
&=\omega((1-t)T+tT^*)+\omega((1-t)T^*-tT)\\
&\le 2\omega(T)\quad({\text{by\;the\;triangle\;inequality}})\\
 & = 2\omega \left( \left[ \begin{matrix}
   O & A  \\
   {{B}^{*}} & O  \\
\end{matrix} \right] \right),
\end{aligned}\]
for any $0\le t\le 1$. 
Thus, noting \eqref{eq_max_w} we have
\[\begin{aligned}
  & \max \left\{ \omega \left( \left( 1-t \right)\left( B+{{A}^{*}} \right)-t\left( A+{{B}^{*}} \right) \right),\omega \left( \left( 1-t \right)\left( B-{{A}^{*}} \right)+t\left( {{B}^{*}}-A \right) \right) \right\} \\ 
 &\quad +\max \left\{ \omega \left( \left( 1-t \right)\left( A+{{B}^{*}} \right)+t\left( B+{{A}^{*}} \right) \right),\omega \left( \left( 1-t \right)\left( A-{{B}^{*}} \right)+t\left( B-{{A}^{*}} \right) \right) \right\} \\ 
 & \le 2\omega \left( \left[ \begin{matrix}
   O & \left( 1-t \right)A+tB  \\
   \left( 1-t \right){{B}^{*}}+t{{A}^{*}} & O  \\
\end{matrix} \right] \right)+2\omega \left( \left[ \begin{matrix}
   O & \left( 1-t \right)B-tA  \\
   \left( 1-t \right){{A}^{*}}-t{{B}^{*}} & O  \\
\end{matrix} \right] \right) \\ 
 & \le 4\omega \left( \left[ \begin{matrix}
   O & A  \\
   {{B}^{*}} & O  \\
\end{matrix} \right] \right).
\end{aligned}\]
 In particular,
 \begin{equation}\label{8}
\begin{aligned}
   & \max \left\{ \left\| \mathfrak IA-\mathfrak IB \right\|,\left\| \mathfrak RA-\mathfrak RB \right\| \right\}+\max \left\{ \left\| \mathfrak RA+\mathfrak RB \right\|,\left\| \mathfrak IA+\mathfrak IB \right\| \right\} \\ 
  & \le \left\| A+B \right\|+\left\| A-B \right\| \\ 
  & \le 4\omega \left( \left[ \begin{matrix}
    O & A  \\
    {{B}^{*}} & O  \\
 \end{matrix} \right] \right).
 \end{aligned}
 \end{equation}

Also noting \eqref{eq_w_average_w}, by the second inequality in \eqref{8}, we get the following  interesting inequalities
\[\begin{aligned}
   \frac{\left\| A+B \right\|+\left\| A-B \right\|}{2}&\le 2\omega \left( \left[ \begin{matrix}
   O & A  \\
   {{B}^{*}} & O  \\
\end{matrix} \right] \right) \\ 
 & \le \omega \left( A+{{B}^{*}} \right)+\omega \left( A-{{B}^{*}} \right).  
\end{aligned}\]
\end{remark}

The following result provides an integral version of \eqref{eq_fuad_mos}; where the numerical radius of convex combinations of operator matrices is used to refine the triangle inequality. Since its proof is similar to Theorem \ref{10}, we state it without details.
\begin{theorem}
Let $A,B\in \mathcal B\left( \mathcal H \right)$. Then
\[\left\| A+B \right\|\le 2\int\limits_{0}^{1}{\omega \left( \left[ \begin{matrix}
   O & \left( 1-t \right)A+tB  \\
   \left( 1-t \right){{B}^{*}}+t{{A}^{*}} & O  \\
\end{matrix} \right] \right)}dt\le \left\| A \right\|+\left\| B \right\|.\]
\end{theorem}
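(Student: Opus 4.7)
The plan is to mirror the proof of Theorem \ref{10}, but replace the pointwise convexity refinement \eqref{6} by its integrated Hermite--Hadamard counterpart \eqref{2}. Set
\[T=\left[\begin{matrix} O & A \\ B^{*} & O \end{matrix}\right]\]
on $\mathcal{H}\oplus\mathcal{H}$. Two observations drive the whole argument: first, $T+T^{*}$ is again block off-diagonal, so by \eqref{eq_norm_blocks} one has $\|\mathfrak{R}T\|=\tfrac{1}{2}\|A+B\|$; and second, the convex combination $(1-t)T+tT^{*}$ remains block off-diagonal, with
\[(1-t)T+tT^{*}=\left[\begin{matrix} O & (1-t)A+tB \\ (1-t)B^{*}+tA^{*} & O \end{matrix}\right],\]
matching the matrix in the statement exactly.

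For the left-hand inequality, I would apply the first inequality in \eqref{2} to $T$, giving $\|\mathfrak{R}T\|\le\int_{0}^{1}\omega((1-t)T+tT^{*})\,dt$. Multiplying by $2$ and substituting the two identifications above yields
\[\|A+B\|\le 2\int_{0}^{1}\omega\!\left(\left[\begin{matrix} O & (1-t)A+tB \\ (1-t)B^{*}+tA^{*} & O \end{matrix}\right]\right)dt.\]

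For the right-hand inequality, I would apply the second inequality in \eqref{2} to obtain $\int_{0}^{1}\omega((1-t)T+tT^{*})\,dt\le\omega(T)$, so it suffices to show $2\omega(T)\le\|A\|+\|B\|$. This is precisely the right half of \eqref{eq_fuad_mos} applied to the block operator $T$, and closes the argument. Alternatively, one can imitate Theorem \ref{10} verbatim by writing $2\omega(T)=\sup_{\theta\in\mathbb{R}}\|e^{\textup i\theta}T+e^{-\textup i\theta}T^{*}\|$ via \eqref{eq_w_re}, converting this into $\sup_{\theta}\|e^{\textup i\theta}A+e^{-\textup i\theta}B\|$ via \eqref{eq_norm_blocks}, and finishing with the triangle inequality. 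There is no substantive obstacle; the only mild care needed is the clean identification of $(1-t)T+tT^{*}$ as the prescribed block matrix, which is purely formal and precisely why the authors are content to omit the details.
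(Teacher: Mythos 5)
Your proposal is correct and follows exactly the route the paper intends (the paper omits the proof, saying it is similar to Theorem \ref{10}): identify $(1-t)T+tT^{*}$ and $\mathfrak{R}T$ for the block operator $T$, apply the Hermite--Hadamard consequence \eqref{2}, and bound $2\omega(T)$ by $\|A\|+\|B\|$ via \eqref{eq_w_re} (or, equivalently, the right half of \eqref{eq_fuad_mos}). No gaps; the block identifications you flag are indeed the only details to check.
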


The matrix operator $\left[\begin{array}{cc}O&A\\B^*&O\end{array}\right]$ is further used to obtain the following improvement of \eqref{eq_abuamer}.
\begin{theorem}\label{11}
Let $A,B\in \mathcal B\left( \mathcal H \right)$.  Then for any $0 \le t \le 1$,
\[\begin{aligned}
  & \left\| A+B \right\|+\frac{\omega \left( \left[ \begin{matrix}
   O & A  \\
   {{B}^{*}} & O  \\
\end{matrix} \right] \right)-\omega \left( \left[ \begin{matrix}
   O & \left( 1-t \right)A+tB  \\
   \left( 1-t \right){{B}^{*}}+t{{A}^{*}} & O  \\
\end{matrix} \right] \right)}{R} \\ 
 & \le \max \left\{ \left\| A \right\|,\left\| B \right\| \right\}+\frac{1}{2}\left( \left\| {{\left| A \right|}^{\frac{1}{2}}}{{\left| B \right|}^{\frac{1}{2}}} \right\|+\left\| {{\left| {{B}^{*}} \right|}^{\frac{1}{2}}}{{\left| {{A}^{*}} \right|}^{\frac{1}{2}}} \right\| \right), 
\end{aligned}\]
where $R=\max \left\{ t,1-t \right\}$. In particular, if $A$ and $B$ are self-adjoint, we get
{\small
\[\left\| A+B \right\|+\frac{\omega \left( \left[ \begin{matrix}
   O & A  \\
   B & O  \\
\end{matrix} \right] \right)-\omega \left( \left[ \begin{matrix}
   O & \left( 1-t \right)A+tB  \\
   \left( 1-t \right)B+tA & O  \\
\end{matrix} \right] \right)}{R}\le \max \left\{ \left\| A \right\|,\left\| B \right\| \right\}+\left\| {{\left| A \right|}^{\frac{1}{2}}}{{\left| B \right|}^{\frac{1}{2}}} \right\|.\]
}
\end{theorem}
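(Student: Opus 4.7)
The plan is to combine the matrix-operator refinement technique already used in Theorem \ref{10} with the Davidson--Power-type bound recorded in \eqref{eq_need_prf}. Set $T=\left[\begin{matrix}O & A\\ B^{*} & O\end{matrix}\right]$ acting on $\mathcal{H}\oplus\mathcal{H}$. Since $T+T^{*}=\left[\begin{matrix}O & A+B\\ B^{*}+A^{*} & O\end{matrix}\right]$, identity \eqref{eq_norm_blocks} gives $\|T+T^{*}\|=\|A+B\|$, so that $\|A+B\|=2\|\mathfrak{R}T\|$. Moreover, a direct computation yields
\[
(1-t)T+tT^{*}=\left[\begin{matrix}O & (1-t)A+tB\\ (1-t)B^{*}+tA^{*} & O\end{matrix}\right].
\]

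Next I would apply the first inequality in \eqref{6} to the operator $T$, which after multiplication by $2$ reads
\[
2\|\mathfrak{R}T\|+\frac{\omega(T)-\omega((1-t)T+tT^{*})}{R}\le 2\omega(T).
\]
Substituting the identifications from the previous step converts the left-hand side into the left-hand side of the claimed inequality. To finish, one bounds $2\omega(T)$ from above using \eqref{eq_need_prf}, which yields exactly $\max\{\|A\|,\|B\|\}+\tfrac{1}{2}\bigl(\|\,|A|^{1/2}|B|^{1/2}\|+\|\,|B^{*}|^{1/2}|A^{*}|^{1/2}\|\bigr)$. Chaining these two estimates gives the general inequality.

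For the self-adjoint specialization, $A^{*}=A$ and $B^{*}=B$, so $|A|^{1/2}|B|^{1/2}$ and $|B^{*}|^{1/2}|A^{*}|^{1/2}=|B|^{1/2}|A|^{1/2}$ are adjoints of one another; thus their operator norms coincide and $\tfrac12(\|\,|A|^{1/2}|B|^{1/2}\|+\|\,|B|^{1/2}|A|^{1/2}\|)=\|\,|A|^{1/2}|B|^{1/2}\|$, producing the sharper self-adjoint form.

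There is essentially no technical obstacle: the whole argument is a careful bookkeeping exercise that glues together two results already proved in the paper. The only point that needs a moment of care is the simple block computation for $(1-t)T+tT^{*}$ and the reduction of $|B^{*}|^{1/2}|A^{*}|^{1/2}$ to $|B|^{1/2}|A|^{1/2}$ in the self-adjoint case; both are routine.
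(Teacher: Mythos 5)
Your proposal is correct and follows essentially the same route as the paper: the paper's proof simply combines inequality \eqref{eq_need_prf} with inequality \eqref{4} (the first inequality in \eqref{6}) applied to $T=\left[\begin{smallmatrix}O & A\\ B^{*} & O\end{smallmatrix}\right]$, which is exactly the chaining you carry out, with the block identities $\|T+T^{*}\|=\|A+B\|$ and $(1-t)T+tT^{*}$ computed as you do. Your treatment of the self-adjoint case, using that $|B|^{1/2}|A|^{1/2}$ is the adjoint of $|A|^{1/2}|B|^{1/2}$, is also the intended (routine) reduction.
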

\begin{proof}

Combining \eqref{eq_need_prf} with the inequality \eqref{4}, we infer the desired result.
\end{proof}

\begin{remark}
It is worthwhile to mention here that if $A$ and $B$ are positive operators, then Theorem \ref{11} reduces to \cite{7}
\[\left\| A+B \right\|\le \max \left\{ \left\| A \right\|,\left\| B \right\| \right\}+\left\| {{A}^{\frac{1}{2}}}{{B}^{\frac{1}{2}}} \right\|.\]
This follows from the following point for positive operators \cite{8}
\begin{equation}\label{eq_ned_pf_remark}
\omega \left( \left[ \begin{matrix}
   O & \left( 1-t \right)A+tB  \\
   \left( 1-t \right)B+tA & O  \\
\end{matrix} \right] \right)=\omega \left( \left[ \begin{matrix}
   O & A  \\
   B & O  \\
\end{matrix} \right] \right)=\frac{1}{2}\left\| A+B \right\|.
\end{equation}

\end{remark}

Now we move to study inequalities for $\omega(AB)$, where $A,B\in\mathcal{B}(\mathcal{H})$. Interestingly, the following numerical radius inequality leads to a new proof of the arithmetic-geometric mean inequality for positive operators, as we shall see in Remark \ref{remark_amgm} below.
\begin{theorem}\label{12}
Let $A,B\in \mathcal B\left( \mathcal H \right)$.  Then for any $0 \le t \le 1$,
\[{{\omega }^{\frac{1}{2}}}\left( AB \right)\le \frac{1}{2}\left\| A+B^{*} \right\|+\frac{\omega \left( \left[ \begin{matrix}
   O & A  \\
   {{B}} & O  \\
\end{matrix} \right] \right)-\omega \left( \left[ \begin{matrix}
   O & \left( 1-t \right)A+tB^{*}  \\
   \left( 1-t \right){{B}}+t{{A}^{*}} & O  \\
\end{matrix} \right] \right)}{2r},\]
where $r=\min \left\{ t,1-t \right\}$.
\end{theorem}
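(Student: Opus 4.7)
The plan is to apply the right half of inequality \eqref{6} to the block operator $T=\left[\begin{smallmatrix} O & A \\ B & O\end{smallmatrix}\right]$ on $\mathcal{H}\oplus\mathcal{H}$, and then control $\omega^{1/2}(AB)$ by $\omega(T)$ via the power inequality \eqref{eq_power_ineq}.

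First I would set $T=\left[\begin{matrix} O & A \\ B & O\end{matrix}\right]$ and observe the two key block identities: $T^2=\left[\begin{matrix} AB & O \\ O & BA\end{matrix}\right]$ (so that $\omega(T^2)\ge\omega(AB)$, since the numerical radius of a block-diagonal operator is the maximum of the numerical radii of the diagonal blocks), and $\mathfrak{R}T=\tfrac{1}{2}\left[\begin{matrix} O & A+B^* \\ B+A^* & O\end{matrix}\right]$. The second identity, combined with \eqref{eq_norm_blocks}, yields
\[
\|\mathfrak{R}T\|=\tfrac{1}{2}\|A+B^*\|.
\]
I would also record that $(1-t)T+tT^*=\left[\begin{matrix} O & (1-t)A+tB^* \\ (1-t)B+tA^* & O\end{matrix}\right]$, so that the ``perturbation'' term on the right-hand side of the statement matches $\omega(T)-\omega((1-t)T+tT^*)$ exactly.

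Next I would apply the power inequality \eqref{eq_power_ineq} with $n=2$:
\[
\omega(AB)\le\omega(T^2)\le\omega^2(T),
\]
which gives $\omega^{1/2}(AB)\le\omega(T)$. Then, from the rightmost inequality in \eqref{6} (rewritten as $\omega(T)\le\|\mathfrak{R}T\|+\tfrac{\omega(T)-\omega((1-t)T+tT^*)}{2r}$), I would chain these bounds together to conclude
\[
\omega^{1/2}(AB)\le\omega(T)\le\tfrac{1}{2}\|A+B^*\|+\frac{\omega(T)-\omega((1-t)T+tT^*)}{2r},
\]
which is the desired inequality after substituting back the block forms of $T$ and $(1-t)T+tT^*$.

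There is no real obstacle here, since all three ingredients—the block computation of $T^2$, the identity $\|\mathfrak{R}T\|=\tfrac{1}{2}\|A+B^*\|$, and inequality \eqref{6}—are already in hand. The only subtle point worth verifying carefully is the inequality $\omega(T^2)\ge\omega(AB)$ for the block-diagonal $T^2$; this follows because for any block-diagonal $\left[\begin{smallmatrix} X & O \\ O & Y\end{smallmatrix}\right]$ one has numerical range equal to the convex hull of the numerical ranges of $X$ and $Y$, so its numerical radius is $\max\{\omega(X),\omega(Y)\}$. Everything else is substitution.
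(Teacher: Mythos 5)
Your proposal is correct and follows essentially the same route as the paper: it applies the second inequality of \eqref{6} to the block operator $T=\left[\begin{smallmatrix} O & A \\ B & O\end{smallmatrix}\right]$, uses $\|\mathfrak{R}T\|=\tfrac{1}{2}\|A+B^*\|$ via \eqref{eq_norm_blocks}, and bounds $\omega^{1/2}(AB)$ by $\omega(T)$ through $T^2=\operatorname{diag}(AB,BA)$ and the power inequality \eqref{eq_power_ineq}. The only cosmetic difference is that the paper writes the chain with an extra factor of $2$ and passes through $\max\{\omega^{1/2}(AB),\omega^{1/2}(BA)\}$, which is the same block-diagonal fact you verified.
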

\begin{proof}
By the second inequality in \eqref{6}, we have
\[2\omega \left( \left[ \begin{matrix}
   O & A  \\
   {{B}^{*}} & O  \\
\end{matrix} \right] \right)\le \left\| A+B \right\|+\frac{\omega \left( \left[ \begin{matrix}
   O & A  \\
   {{B}^{*}} & O  \\
\end{matrix} \right] \right)-\omega \left( \left[ \begin{matrix}
   O & \left( 1-t \right)A+tB  \\
   \left( 1-t \right){{B}^{*}}+t{{A}^{*}} & O  \\
\end{matrix} \right] \right)}{r}.\]
Thus,
\[\begin{aligned}
  & 2{{\omega }^{\frac{1}{2}}}\left( AB \right) \\ 
 & \le 2\max \left\{ {{\omega }^{\frac{1}{2}}}\left( AB \right),{{\omega }^{\frac{1}{2}}}\left( BA \right) \right\} \\ 
 & =2{{\omega }^{\frac{1}{2}}}\left( \left[ \begin{matrix}
   AB & O  \\
   O & BA  \\
\end{matrix} \right] \right) \\ 
 & =2{{\omega }^{\frac{1}{2}}}\left( {{\left[ \begin{matrix}
   O & A  \\
   B & O  \\
\end{matrix} \right]}^{2}} \right) \\ 
 & \le 2\omega \left( \left[ \begin{matrix}
   O & A  \\
   B & O  \\
\end{matrix} \right] \right)\quad({\text{by}}\;\eqref{eq_power_ineq}) \\ 
 & \le \left\| A+B^{*} \right\|+\frac{\omega \left( \left[ \begin{matrix}
   O & A  \\
   {{B}} & O  \\
\end{matrix} \right] \right)-\omega \left( \left[ \begin{matrix}
   O & \left( 1-t \right)A+tB^{*}  \\
   \left( 1-t \right){{B}}+t{{A}^{*}} & O  \\
\end{matrix} \right] \right)}{r}, 
\end{aligned}\]
which completes the proof.
\end{proof}

Now we use Theorem \ref{12} to prove the following arithmetic-geometric mean inequality for positive operators.
\begin{remark}\label{remark_amgm}
Let $A,B\in \mathcal B\left( \mathcal H \right)$ be two positive operators. It follows from Theorem \ref{12},
\[\begin{aligned}
\left\| {{A}^{\frac{1}{2}}}{{B}^{\frac{1}{2}}} \right\|&={{r}^{\frac{1}{2}}}\left( AB \right)\quad \text{(by \cite[(2.1)]{5})}\\
&\le {{\omega }^{\frac{1}{2}}}\left( AB \right)\\
&\le \frac{1}{2}\left\| A+B \right\|,
\end{aligned}\]
where \eqref{eq_ned_pf_remark} has been used together with the fact that $r(T)\le \omega(T)$ for any $T\in\mathcal{B}(\mathcal{H})$.
\end{remark}

While Theorem \ref{12} provides an upper bound of $\omega(AB)$ in terms of  $ \left[ \begin{matrix}
   O & A  \\
   {{B}} & O  \\
\end{matrix} \right]$, we have the following lower bound in terms of the same matrix operator.
\begin{theorem}\label{1}
Let $A,B\in \mathcal B\left( \mathcal H \right)$. Then
\[\omega \left( \left[ \begin{matrix}
   O & A  \\
   B & O  \\
\end{matrix} \right] \right)\le \sqrt{\max \left\{ \omega \left( AB \right),\omega \left( BA \right) \right\}+\underset{\lambda \in \mathbb{C}}{\mathop{\inf }}\,{{\left\| \left[ \begin{matrix}
   -\lambda I & A  \\
   B & -\lambda I  \\
\end{matrix} \right] \right\|}^{2}}},\]
where $I$ is the identity operator in $\mathcal{B}(\mathcal{H}).$
\end{theorem}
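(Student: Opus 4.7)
The plan is to reduce the statement to the shift-invariant scalar bound
\[\omega(T)^{2}\le \omega(T^{2})+\|T-\lambda I\|^{2},\qquad \lambda\in\mathbb C,\]
where $T=\left[\begin{matrix}O&A\\B&O\end{matrix}\right]$ and $I$ denotes the identity on $\mathcal H\oplus\mathcal H$. Since $T^{2}=\left[\begin{matrix}AB&O\\O&BA\end{matrix}\right]$ is block-diagonal, the numerical radius of a block-diagonal operator gives $\omega(T^{2})=\max\{\omega(AB),\omega(BA)\}$, while $T-\lambda I=\left[\begin{matrix}-\lambda I&A\\B&-\lambda I\end{matrix}\right]$. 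Taking the infimum over $\lambda\in\mathbb C$ and then a square root in the displayed inequality recovers the claim.

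To establish the scalar bound I would fix a unit vector $x\in\mathcal H\oplus\mathcal H$, set $R=T-\lambda I$ and $\alpha=\langle Rx,x\rangle$, and decompose orthogonally
\[Rx=\alpha x+y,\quad y\perp x,\qquad \|y\|^{2}=\|Rx\|^{2}-|\alpha|^{2}\le\|R\|^{2}.\]
A direct expansion using $T=R+\lambda I$ yields $\langle Tx,x\rangle^{2}=(\alpha+\lambda)^{2}$ and $\langle T^{2}x,x\rangle=\langle R^{2}x,x\rangle+2\lambda\alpha+\lambda^{2}$, while the orthogonal splitting forces $\langle R^{2}x,x\rangle=\alpha^{2}+\langle Ry,x\rangle$. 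Subtracting telescopes these relations into the key identity
\[\langle Tx,x\rangle^{2}=\langle T^{2}x,x\rangle-\langle Ry,x\rangle.\]
Taking moduli and applying $|\langle T^{2}x,x\rangle|\le\omega(T^{2})$ together with the Cauchy--Schwarz bound $|\langle Ry,x\rangle|\le\|R\|\,\|y\|\le\|R\|^{2}$, then passing to the supremum over $x$ and the infimum over $\lambda$, delivers the desired inequality.

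The main obstacle to anticipate is preserving the constant $1$ in front of $\|T-\lambda I\|^{2}$ instead of $2$. A more direct attack that writes $|\langle Tx,x\rangle|^{2}=|\alpha+\lambda|^{2}$ and bounds $|\alpha|^{2}$ and the $\lambda$-cross terms separately -- the latter through $|\langle R^{2}x,x\rangle|\le\|R\|^{2}$ -- only gives $\omega(T)^{2}\le\omega(T^{2})+2\|T-\lambda I\|^{2}$, losing a factor of two. The orthogonal splitting $Rx=\alpha x+y$ is what prevents this loss: the component along $x$ is exactly what gets absorbed into the diagonal term $\langle T^{2}x,x\rangle$, so only the transverse part $\|y\|\le\|R\|$ contributes to the remainder, and the sharp constant survives.
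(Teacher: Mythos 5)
Your proposal is correct, and its overall skeleton is the same as the paper's: take $T=\left[\begin{smallmatrix}O&A\\B&O\end{smallmatrix}\right]$, note $T^{2}=\left[\begin{smallmatrix}AB&O\\O&BA\end{smallmatrix}\right]$ so that $\omega(T^{2})=\max\{\omega(AB),\omega(BA)\}$, and feed this into the shift-invariant bound $\omega^{2}(T)\le\omega(T^{2})+\inf_{\lambda\in\mathbb C}\|T-\lambda I\|^{2}$. The difference is in how that bound is obtained: the paper simply invokes it as the main result of Dragomir's note \cite{9} (the Krein--Lin--type inequality) and does not prove it, whereas you reprove it from scratch via the orthogonal splitting $Rx=\alpha x+y$, $y\perp x$, with $R=T-\lambda I$. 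I checked your algebra: $\langle R^{2}x,x\rangle=\alpha^{2}+\langle Ry,x\rangle$, $\langle Tx,x\rangle^{2}=(\alpha+\lambda)^{2}$ and $\langle T^{2}x,x\rangle=\langle R^{2}x,x\rangle+2\lambda\alpha+\lambda^{2}$ do telescope to $\langle Tx,x\rangle^{2}=\langle T^{2}x,x\rangle-\langle Ry,x\rangle$, and since $\|y\|^{2}=\|Rx\|^{2}-|\alpha|^{2}\le\|R\|^{2}$, Cauchy--Schwarz gives $|\langle Tx,x\rangle|^{2}\le\omega(T^{2})+\|T-\lambda I\|^{2}$ for every unit $x$ and every $\lambda$, which is exactly the cited lemma with the sharp constant $1$ (your remark about why the naive estimate loses a factor $2$ is well taken). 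What your route buys is a self-contained proof that does not depend on \cite{9}; what the paper's route buys is brevity, since the lemma is quoted in one line. Two small points worth making explicit if you write this up: state (or prove in one line) that the numerical radius of a block-diagonal operator is the maximum of the numerical radii of its blocks, and note that taking moduli in your identity uses $|\langle Tx,x\rangle^{2}|=|\langle Tx,x\rangle|^{2}$ before passing to the supremum over $x$ and then the infimum over $\lambda$.
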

\begin{proof}
By the main result of \cite{9}, we can write
\[\begin{aligned}
   \max \left\{ \omega \left( A{{B}^{*}} \right),\omega \left( {{B}^{*}}A \right) \right\}&=\omega \left( \left[ \begin{matrix}
   A{{B}^{*}} & O  \\
   O & {{B}^{*}}A  \\
\end{matrix} \right] \right) \\ 
 & =\omega \left( \left[ \begin{matrix}
   O & A  \\
   {{B}^{*}} & O  \\
\end{matrix} \right]\left[ \begin{matrix}
   O & A  \\
   {{B}^{*}} & O  \\
\end{matrix} \right] \right) \\ 
 & =\omega \left( {{T}^{2}} \right) \\ 
 & \ge {{\omega }^{2}}\left( T \right)-\underset{\lambda \in \mathbb{C}}{\mathop{\inf }}\,{{\left\| T-\lambda I \right\|}^{2}} \\ 
 & ={{\omega }^{2}}\left( \left[ \begin{matrix}
   O & A  \\
   {{B}^{*}} & O  \\
\end{matrix} \right] \right)-\underset{\lambda \in \mathbb{C}}{\mathop{\inf }}\,{{\left\| \left[ \begin{matrix}
   -\lambda I & A  \\
   {{B}^{*}} & -\lambda I  \\
\end{matrix} \right] \right\|}^{2}},
\end{aligned}\]
which completes the proof.
\end{proof}

\begin{remark}
It follows from Theorem \ref{1} that for $X_i\in\mathcal{B}(\mathcal{H})$ $\left( i=1,2,3,4 \right)$,
\[\begin{aligned}
  & \omega \left( \left[ \begin{matrix}
   {{X}_{1}} & {{X}_{2}}  \\
   {{X}_{3}} & {{X}_{4}}  \\
\end{matrix} \right] \right) \\ 
 & =\omega \left( \left[ \begin{matrix}
   {{X}_{1}} & O  \\
   O & {{X}_{4}}  \\
\end{matrix} \right]+\left[ \begin{matrix}
   O & {{X}_{2}}  \\
   {{X}_{3}} & O  \\
\end{matrix} \right] \right) \\ 
 & \le \omega \left( \left[ \begin{matrix}
   {{X}_{1}} & O  \\
   O & {{X}_{4}}  \\
\end{matrix} \right] \right)+\omega \left( \left[ \begin{matrix}
   O & {{X}_{2}}  \\
   {{X}_{3}} & O  \\
\end{matrix} \right] \right) \\ 
 & \le \max \left\{ \omega \left( {{X}_{1}} \right),\omega \left( {{X}_{4}} \right) \right\}+\sqrt{\max \left\{ \omega \left( {{X}_{2}}{{X}_{3}} \right),\omega \left( {{X}_{3}}{{X}_{2}} \right) \right\}+\underset{\lambda \in \mathbb{C}}{\mathop{\inf }}\,{{\left\| \left[ \begin{matrix}
   -\lambda I & {{X}_{2}}  \\
   {{X}_{3}} & -\lambda I  \\
\end{matrix} \right] \right\|}^{2}}}.  
\end{aligned}\]
\end{remark}
\begin{remark}
Notice that
\[\begin{aligned}
   r\left( {{X}_{1}}{{X}_{2}}+{{X}_{3}}{{X}_{4}} \right) & =r\left( \left[ \begin{matrix}
   {{X}_{1}}{{X}_{2}}+{{X}_{3}}{{X}_{4}} & O  \\
   O & O  \\
\end{matrix} \right] \right) \\ 
 & =r\left( \left[ \begin{matrix}
   {{X}_{1}} & {{X}_{3}}  \\
   O & O  \\
\end{matrix} \right]\left[ \begin{matrix}
   {{X}_{2}} & O  \\
   {{X}_{4}} & O  \\
\end{matrix} \right] \right) \\ 
 & =r\left( \left[ \begin{matrix}
   {{X}_{2}} & O  \\
   {{X}_{4}} & O  \\
\end{matrix} \right]\left[ \begin{matrix}
   {{X}_{1}} & {{X}_{3}}  \\
   O & O  \\
\end{matrix} \right] \right) \\ 
 & =r\left( \left[ \begin{matrix}
   {{X}_{2}}{{X}_{1}} & {{X}_{2}}{{X}_{3}}  \\
   {{X}_{4}}{{X}_{1}} & {{X}_{4}}{{X}_{3}}  \\
\end{matrix} \right] \right) \\ 
 & \le \omega \left( \left[ \begin{matrix}
   {{X}_{2}}{{X}_{1}} & {{X}_{2}}{{X}_{3}}  \\
   {{X}_{4}}{{X}_{1}} & {{X}_{4}}{{X}_{3}}  \\
\end{matrix} \right] \right). 
\end{aligned}\]
If in the above inequality we put ${{X}_{1}}={{e}^{\textup i\theta }}A$, ${{X}_{2}}=B$, ${{X}_{3}}={{e}^{-\textup i\theta }}{{B}^{*}}$, and ${{X}_{4}}={{A}^{*}}$, we reach
\begin{equation}\label{13}
\left\| {{\operatorname{\mathfrak Re}}^{\textup i\theta }}AB \right\|\le \frac{1}{2}\omega \left( \left[ \begin{matrix}
   {{e}^{\textup i\theta }}BA & {{e}^{-\textup i\theta }}B{{B}^{*}}  \\
   {{e}^{\textup i\theta }}{{A}^{*}}A & {{\left( {{e}^{\textup i\theta }}BA \right)}^{*}}  \\
\end{matrix} \right] \right).
\end{equation}
This indicates the relation between the numerical radius of the product of two operators and the numerical radius of $2\times 2$ operator matrices.

The case $A=U{{\left| T \right|}^{1-t}}$ and $B={{\left| T \right|}^{t}}$, in \eqref{13}, implies
\[\left\| \mathfrak R{{e}^{\textup i\theta }}T \right\|\le \frac{1}{2}\omega \left( \left[ \begin{matrix}
   {{e}^{\textup i\theta }}\widetilde{{{T}_{t}}} & {{e}^{-\textup i\theta }}{{\left| T \right|}^{2t}}  \\
   {{e}^{\textup i\theta }}{{\left| T \right|}^{2\left( 1-t \right)}} & {{\left( {{e}^{\textup i\theta }}\widetilde{{{T}_{t}}} \right)}^{*}}  \\
\end{matrix} \right] \right),\quad 0\le t \le 1,\]
where $\widetilde{{{T}_{t}}}$ is the weighted Aluthge transform of $T$ defined by $\widetilde{{{T}_{t}}}=|T|^tU|T|^{1-t},$ where $U$ is the partial isometry appearing in the polar decomposition in $T=U|T|.$ 

Notice that, if we replace $A=\sqrt{\frac{\left\| B \right\|}{\left\| A \right\|}}A$ and $B=\sqrt{\frac{\left\| A \right\|}{\left\| B \right\|}}B$, in \eqref{13}, we also have
\begin{equation}\label{14}
\left\| {{\operatorname{\mathfrak Re}}^{\textup i\theta }}AB \right\|\le \frac{1}{2}\omega \left( \left[ \begin{matrix}
   {{e}^{\textup i\theta }}BA & {{e}^{-\textup i\theta }}\frac{\left\| A \right\|}{\left\| B \right\|}B{{B}^{*}}  \\
   {{e}^{\textup i\theta }}\frac{\left\| B \right\|}{\left\| A \right\|}{{A}^{*}}A & {{\left( {{e}^{\textup i\theta }}BA \right)}^{*}}  \\
\end{matrix} \right] \right),
\end{equation}
and
\[\left\| {{\operatorname{\mathfrak Re}}^{\textup i\theta }}T \right\|\le \frac{1}{2}\omega \left( \left[ \begin{matrix}
   {{e}^{\textup i\theta }}\widetilde{{{T}_{t}}} & {{e}^{-\textup i\theta }}{{\left\| T \right\|}^{1-2t}}{{\left| T \right|}^{2t}}  \\
   {{e}^{\textup i\theta }}{{\left\| T \right\|}^{2t-1}}{{\left| T \right|}^{2\left( 1-t \right)}} & {{\left( {{e}^{\textup i\theta }}\widetilde{{{T}_{t}}} \right)}^{*}}  \\
\end{matrix} \right] \right).\]
\end{remark}
To better understand how the above relations help obtain the numerical radius of the product of two operators, we give an example. Recall that in \cite[Corollary 2]{8}, Abu-Omar and Kittaneh proved that if $\mathcal H_1$ and $\mathcal H_2$ are Hilbert spaces and $\mathbb X=\left[ \begin{matrix}
   {{X}_{1}} & {{X}_{2}}  \\
   {{X}_{3}} & {{X}_{4}}  \\
\end{matrix} \right]$  is an operator matrix with
$X_1\in \mathcal B(\mathcal H_1)$, $X_2\in \mathcal B(\mathcal H_2,\mathcal H_1)$, $X_3\in \mathcal B(\mathcal H_1,\mathcal H_2)$, and $X_4\in \mathcal B(\mathcal H_2)$, then
\begin{equation*}
\omega \left( \mathbb X \right)\le \frac{1}{2}\left( \omega \left( {{X}_{1}} \right)+\omega \left( {{X}_{4}} \right)+\sqrt{{{\left( \omega \left( {{X}_{1}} \right)-\omega \left( {{X}_{4}} \right) \right)}^{2}}+4{{\omega }^{2}}\left( \mathbb E \right)} \right),
\end{equation*}
where $\mathbb E=\left[ \begin{matrix}
   O & {{X}_{2}}  \\
   {{X}_{3}} & O  \\
\end{matrix} \right]$.
In the same paper (see \cite[Remark 6]{8}), it has been shown that
\[{{\omega }}\left( \mathbb E \right)\le \min \left\{ {{\alpha }_{1}},{{\alpha }_{2}} \right\}\]
where
\[{{\alpha }_{1}}=\frac{1}{4}\sqrt{\left\| {{\left| {{X}_{2}} \right|}^{2}}+{{\left| X_{3}^{*} \right|}^{2}} \right\|+2\omega \left( {{X}_{3}}{{X}_{2}} \right)}\;\text{ and }\;{{\alpha }_{2}}=\frac{1}{4}\sqrt{\left\| {{\left| X_{2}^{*} \right|}^{2}}+{{\left| {{X}_{3}} \right|}^{2}} \right\|+2\omega \left( {{X}_{2}}{{X}_{3}} \right)}.\]
Combining these two inequalities we get 
\[\omega \left( \left[ \begin{matrix}
   {{X}_{1}} & {{X}_{2}}  \\
   {{X}_{3}} & {{X}_{4}}  \\
\end{matrix} \right] \right)\le \frac{1}{2}\left( \omega \left( {{X}_{1}} \right)+\omega \left( {{X}_{4}} \right)+\sqrt{{{\left( \omega \left( {{X}_{1}} \right)-\omega \left( {{X}_{4}} \right) \right)}^{2}}+4\min \left\{ \alpha _{1}^{2},\alpha _{2}^{2} \right\}} \right).\]
Now, using this and \eqref{14}, we have
\[\left\| {{\operatorname{\mathfrak Re}}^{\textup i\theta }}AB \right\|\le \frac{1}{2}\left( \omega \left( BA \right)+\min \left\{ {{\beta }_{1}},{{\beta }_{2}} \right\} \right),\]
where
\[{{\beta }_{1}}=\frac{1}{2}\sqrt{\left\| \frac{{{\left\| A \right\|}^{2}}}{{{\left\| B \right\|}^{2}}}{{\left| {{B}^{*}} \right|}^{4}}+\frac{{{\left\| B \right\|}^{2}}}{{{\left\| A \right\|}^{2}}}{{\left| A \right|}^{4}} \right\|+2\omega \left( {{\left| A \right|}^{2}}{{\left| {{B}^{*}} \right|}^{2}} \right)},\]
and
\[{{\beta }_{2}}=\frac{1}{2}\sqrt{\left\| \frac{{{\left\| A \right\|}^{2}}}{{{\left\| B \right\|}^{2}}}{{\left| {{B}^{*}} \right|}^{4}}+\frac{{{\left\| B \right\|}^{2}}}{{{\left\| A \right\|}^{2}}}{{\left| A \right|}^{4}} \right\|+2\omega \left( {{\left| {{B}^{*}} \right|}^{2}}{{\left| A \right|}^{2}} \right)}.\]
This implies,
\[\omega \left( AB \right)\le \frac{1}{2}\omega \left( BA \right)+\frac{1}{4}\sqrt{\left\| \frac{{{\left\| A \right\|}^{2}}}{{{\left\| B \right\|}^{2}}}{{\left| {{B}^{*}} \right|}^{4}}+\frac{{{\left\| B \right\|}^{2}}}{{{\left\| A \right\|}^{2}}}{{\left| A \right|}^{4}} \right\|+2\min \left\{ \omega \left( {{\left| A \right|}^{2}}{{\left| {{B}^{*}} \right|}^{2}} \right),\omega \left( {{\left| {{B}^{*}} \right|}^{2}}{{\left| A \right|}^{2}} \right) \right\}}.\]
We also have by \eqref{13},
\[\omega \left( AB \right)\le \frac{1}{2}\omega \left( BA \right)+\frac{1}{4}\sqrt{\left\| {{\left| {{B}^{*}} \right|}^{4}}+{{\left| A \right|}^{4}} \right\|+2\min \left\{ \omega \left( {{\left| A \right|}^{2}}{{\left| {{B}^{*}} \right|}^{2}} \right),\omega \left( {{\left| {{B}^{*}} \right|}^{2}}{{\left| A \right|}^{2}} \right) \right\}}.\]

\vskip 0.3 true cm 

\noindent{\tiny (M. Sababheh) Vice president, Princess Sumaya University for Technology, Amman, Jordan}
	
\noindent	{\tiny\textit{E-mail address:} sababheh@psut.edu.jo; sababheh@yahoo.com}

\vskip 0.3 true cm

\noindent{\tiny (C. Conde)  Instituto de Ciencias, Universidad Nacional de General Sarmiento  and  Consejo Nacional de Investigaciones Cient\'ificas y Tecnicas, Argentina}

\noindent{\tiny \textit{E-mail address:} cconde@campus.ungs.edu.ar}

\vskip 0.3 true cm

\noindent{\tiny (H. R. Moradi) Department of Mathematics, Payame Noor University (PNU), P.O. Box, 19395-4697, Tehran, Iran
	
\noindent	\textit{E-mail address:} hrmoradi@mshdiau.ac.ir}

\begin{thebibliography}{9}
\bibitem{8}
A. Abu-Omar, F. Kittaneh, {\it Numerical radius inequalities for $n\times n$ operator matrices}, Linear Algebra Appl. 468 (2015), 18--26.


\bibitem{6}
A. Abu-Omar, F. Kittaneh, {\it Generalized spectral radius and norm inequalities for Hilbert space operators}, Int. J. Math. 26(11) (2015), 1550097.

\bibitem{bhatia} 
R. Bhatia, {\it Matrix analysis}, Springer, NewYork, 1997.


\bibitem{9}
S. S. Dragomir,  {\it A note on numerical radius and the Krein-Lin inequality}, Mathematica. 60(83) (2018), 149--151.

\bibitem{2}
S. S. Dragomir, {\it Bounds for the normalised Jensen functional}, Bull. Austral. Math. Soc. 3 (2006), 471--478.

\bibitem{gust} 
K. Gustafson, D. Rao, {\it Numerical range: the field of values of linear operators and matrices}, Springer, 1995.


\bibitem{4}
O. Hirzallah, F. Kittaneh, and K. Shebrawi, {\it Numerical radius inequalities for certain $2\times 2$ operator matrices}, Integr. Equ. Oper. Theory. 71 (2011), 129--147.

\bibitem{1}
F. Kittaneh, M. S. Moslehian, and T. Yamazaki, {\it Cartesian decomposition and numerical radius inequalities}, Linear Algebra Appl. 471 (2015), 46--53. 

\bibitem{7}
F. Kittaneh, {\it Norm inequalities for certain operator sums}, J. Funct. Anal. 143 (1997), 337--348.

\bibitem{Li}
C.-K. Li, Y. T. Poon, {\it Submultiplicativity of the numerical radius of commuting matrices of order two}, J. Math. Anal. Appl. 475(1) (2019), 730--735.

\bibitem{sab_mia}
M. Sababheh, {\it Improved Jensen's inequality}, Math. Inequal. Appl. 20(2) (2017), 389--403.

\bibitem{sab_mjom} 
M. Sababheh, {\it Means refinements via convexity}, Mediterr. J. Math. 14, 125 (2017). 

\bibitem{5}
K. Shebrawi, {\it Numerical radius inequalities for certain $2\times 2$ operator matrices II}, Linear Algebra Appl. 523 (2017), 1--12.

\bibitem{3}
T. Yamazaki, {\it On upper and lower bounds of the numerical radius and an equality condition}, Studia Math. 178 (2007), 83--89.
\end{thebibliography}
\end{document}